\newtheorem{theorem}{Theorem}
\newtheorem{lemma}[theorem]{Lemma}
\newtheorem{proposition}[theorem]{Proposition}
\newtheorem{corollary}[theorem]{Corollary}
\newcommand{\AGL}{{\mathrm{AGL}}}
\newcommand{\Aut}{{\mathrm{Aut}}} 
\newcommand{\PAut}{{\mathrm{PAut}}}
\newcommand{\lcm}{{\mathrm{lcm}}}
\newcommand{\F}{{\mathbb{F}}}
\newcommand{\M}{\mathbb{M}}
\newcommand{\cB}{{\mathcal{B}}} 
\newcommand{\N}{{\mathcal{N}}} 
\newcommand{\C}{{\mathcal{C}}}
\def\qi#1 {\fbox {\footnote {\ }}\ \footnotetext { From Qi: {\color{red}#1}}}
\begin{document}

\title[Steiner systems $S(2,4, 2^m)$ from a family of extended cyclic codes]{Steiner systems $S(2,4, 2^m)$ supported by a family of extended cyclic codes}

\author{Qi Wang}
\address{Department of Computer Science and Engineering, National Center for Applied Mathematics Shenzhen, Southern University of Science and Technology, Nanshan District, Shenzhen, Guangdong 518055, China}
\email{wangqi@sustech.edu.cn}
\thanks{}

\subjclass{Primary: 05B05, 51E10; Secondary: 94B15.}
\keywords{Steiner system, $t$-design, cyclic code, linear code.}

\date{}

\dedicatory{}





\maketitle

\begin{abstract} 
In [C. Ding, An infinite family of Steiner systems $S(2,4,2^m)$ from cyclic codes, {\em J. Combin. Des.} 26 (2018), no.3, 126--144], Ding constructed a family of Steiner systems $S(2,4,2^m)$ for all $m \equiv 2 \pmod{4} \ge 6$ from a family of extended cyclic codes. The objective of this paper is to present a family of Steiner systems $S(2,4,2^m)$ for all $m \equiv 0 \pmod{4} \ge 4$ supported by this family of extended cyclic codes. The main result of this paper complements the
previous work of Ding, and the results in the two papers will show that there exists a binary extended cyclic code that can support a Steiner system $S(2,4,2^m)$ for all even $m \geq 4$. Furthermore, this paper also determines the parameters of other $2$-designs supported by this family of extended cyclic codes.         
\end{abstract}

\section{Introduction}\label{sec-intro}

A (finite) {\em incidence structure} is a triple $(V, \cB, I)$ such that $V$ is a finite set of $v$ elements called {\em points}, $\cB$ is a set of $k$-subsets of $V$ with $1 \le k \le v$, and $I \subseteq V \times \cB$ is an incidence relation between $V$ and $\cB$. The incidence relation $I$ is given by membership, i.e., a point $p \in V$ and a block $B \in \cB$ are incident if and only if $p \in B$. In the following, we may simply denote the incidence structure $(V, \cB, I)$ by a
pair $(V, \cB)$. A $t$-$(v,k,\lambda)$ {\em design} (or $t$-design, for short) with $t \leq k$ is an incidence structure $(V,\cB)$ such that any $t$-subset of $V$ is contained in exactly $\lambda$ blocks~\cite{BJL99}. We denote the number of blocks of a $t$-design by $b$, and a $t$-design is called {\em symmetric} if $v = b$. A $t$-design that has no repeated blocks is called {\em simple}. Throughout this
paper, we consider only simple $t$-designs with $t < k < v$. By a simple counting argument, we have the following identity, which restricts the parameters of a $t$-$(v,k,\lambda)$ design. 
\begin{equation}\label{eqn-para}
  b {k \choose t} = \lambda {v \choose t}.
\end{equation}
In particular, a $t$-$(v,k,\lambda)$ design is referred to as a {\em Steiner system} if $t \ge 2$ and $\lambda = 1$, and is traditionally denoted by $S(t,k,v)$.    

In the literature, the most studied Steiner systems are those with parameters $S(2,3,v)$ ({\em Steiner triple systems}), $S(3,4,v)$ ({\em Steiner quadruple systems}), and $S(2,4,v)$. It was proved in~\cite{Hana61} that a Steiner system $S(2,4,v)$ exists if and only if $v \equiv 1$ or $4 \pmod{12}$. Infinite families of Steiner systems $S(2,4,v)$ include: $S(2,4,4^n)$, where $n \ge 2$ (affine geometries); $S(2,4,3^n+\cdots +3 + 1)$, where $n \ge 2$ (projective geometries); $S(2,4,2^{s+2}
- 2^s + 4)$, where $s > 2$ (Denniston designs). For surveys on such Steiner systems, the reader is referred to~\cite{CM06,RR10}. It is open if the Steiner systems $S(2,4,v)$ listed above are supported by a linear code. We answer this problem affirmatively by investigating the Steiner systems $S(2,4,2^m)$ supported by a family of extended cyclic code for all even $m \ge 4$.  

Recently, Ding constructed an infinite family of Steiner systems $S(2,4,2^m)$ 
for all $m \equiv 2 \pmod{4} \geq 6$ from a family of extended cyclic codes, which 
are affine-invariant~\cite{Ding18}. As a follow-up of~\cite{Ding18}, in this paper, we prove that a  
family of extended binary cyclic codes hold Steiner systems $S(2,4,2^m)$ for all 
$m \equiv 0 \pmod{4} \geq 4$, and also determine the parameters of other $2$-designs supported 
by this family of extended cyclic codes. The result of this paper together with the work 
of \cite{Ding18} shows that there is an extended binary cyclic code that can support a 
Steiner system $S(2,4,2^m)$ for all even $m \geq 4$. Hence, the work of this paper 
complements that of \cite{Ding18}.

The remainder of the paper is organized as follows. In Section~\ref{sec-pre}, we will 
recall linear codes, and introduce two general methods in constructing designs from linear codes, the latter of which will be employed in Section~\ref{sec-main}. After recalling a type of affine-invariant codes, we will determine the weight distribution of their dual codes, and then explicitly compute the parameters of the underlying Steiner systems from these codes. In Section~\ref{sec-con}, we will conclude the paper and present several open problems.

\section{Preliminaries}\label{sec-pre}

In this section, we first introduce some basics of linear codes and cyclic codes, and then summarize two generic constructions of designs from linear codes.  

\subsection{Linear codes and cyclic codes}\label{sec-codes}

An $[n,\kappa,d]$ linear code $\C$ over the finite field $\F_q$ is a $\kappa$-dimensional subspace of $\F_q^n$ with minimum nonzero Hamming weight $d$, where $q$ is a power of a prime $p$. As a linear subspace of $\F_q^n$, the linear code $\C$ can be represented as the span of a minimal set of codewords. These codewords are collated as the rows of a matrix $G$, also known as a {\em generator matrix} of the code $\C$. The matrix $H$, whose null space is $\C$, is called the {\em
parity-check} matrix of $\C$. The {\em dual code} of $\C$ is the code whose generator matrix is the parity-check matrix $H$ of $\C$, denoted by $\C^\perp$, and the {\em extended code} is the code having one extra parity element, denoted by $\overline{\C}$. A linear code $\C$ over $\F_q$ is called {\em cyclic} if each codeword $(c_0, c_1, \ldots, c_{n-1})$ in $\C$ implies that the cyclic shift $(c_{n-1}, c_0, c_1, \ldots, c_{n-2})$ is also a codeword in $\C$. We may identify each vector $(c_0, c_1, \ldots, c_{n-1}) \in \F_q^n$ with 
$$
c_0 + c_1 x + c_2 x^2 + \cdots + c_{n-1} x^{n-1} \in \F_q[x] / \langle x^n - 1 \rangle,
$$
then each code $\C$ of length $n$ over $\F_q$ corresponds to a subset of $\F_q[x]/\langle x^n - 1 \rangle$. It is well-known that the linear code $\C$ is cyclic if and only if the corresponding subset in $\F_q[x]/\langle x^n -1 \rangle$ is an ideal of the ring $\F_q[x]/\langle x^n - 1 \rangle $. Let $\C = \langle g(x) \rangle$, where the generator $g(x)$ of the ideal has the least degree. Here, $g(x)$ is called a {\em generator polynomial} and $h(x) = (x^n -1) / g(x)$ is referred to as a {\em parity-check polynomial} of
$\C$. A cyclic code of length $n = q^m - 1$ over $\F_q$ for some positive integer $m$ is called a {\em primitive cyclic code}.

Let $\N = \{0,1,2, \ldots, n-1\}$. Let $s$ be an integer with $0 \leq s < n$. The {\em $q$-cyclotomic coset of $s$ modulo $n$} is defined by 
$$
C_s := \{s, sq, sq^2, \ldots, sq^{\ell_s - 1} \} \pmod{n} \subseteq \N,
$$
where $\ell_s$ is the smallest positive integer such that $s \equiv s q^{\ell_s} \pmod{n}$, and is exactly the size of the $q$-cyclotomic coset. The smallest integer in $C_s$ is called the {\em coset leader} of $C_s$. Let $\Gamma_{(n,q)}$ denote the set of all coset leaders. It is easy to see that all distinct $q$-cyclotomic cosets modulo $n$ partition $\N$, i.e., $\cup_{s \in \Gamma_{(n,q)}} C_s = \N$. The generator polynomial of the cyclic code $\C$ of length $n$ can be expressed as 
$$
g(x) = \prod_{t \in T} (x - \alpha^t),
$$
where $\alpha$ is a generator of $\F_{q^m}^*$, $T$ is a subset of $\N$ and a union of some $q$-cyclotomic cosets modulo $n$. The set $T$ is called a {\em defining set} of $\C$ with respect to $\alpha$. 

For a $[v,\kappa,d]$ linear code $\C$ over $\F_q$, let $A_i := A_i(\C)$ denote the number of codewords with Hamming weight $i$ in $\C$, where $0 \leq i \leq v$. Then the sequence $(A_0, A_1, \ldots, A_v)$ is called the {\em weight distribution} of $\C$, and $\sum_{i=0}^v A_i z^i$ is referred to as the {\em weight enumerator} of $\C$. The weight distribution is quite important 
not only for measuring the error-correcting capability of a code, but also for determining the parameters of designs supported by linear codes.   

The following lemma, a variant of the MacWilliams Identity~\cite{Lint99}, reveals how to compute the weight enumerator of the dual code $\C^\perp$ from that of the original code $\C$.  

\begin{lemma}\cite[p. 41]{Lint99}\label{lem-MI}
Let $\C$ be a $[v, \kappa, d]$ code over $\F_q$ with weight enumerator $A(z)=\sum_{i=0}^v A_iz^i$ and let
$A^\perp(z)$ be the weight enumerator of $\C^\perp$. Then
$$A^\perp(z)=q^{-\kappa}\Big(1+(q-1)z\Big)^vA\Big(\frac {1-z} {1+(q-1)z}\Big).$$

\end{lemma}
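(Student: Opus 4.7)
My plan is to prove the identity via character theory and the Poisson summation formula for the finite abelian group $\F_q^v$, which is the standard route to the MacWilliams identity. First, I would fix a nontrivial additive character $\chi$ of $\F_q$ and extend it to $\F_q^v$ by $\chi(u \cdot w) = \chi(u_1 w_1 + \cdots + u_v w_v)$. For a function $f \colon \F_q^v \to \bC$, the Fourier (Hadamard) transform is defined by
$$\hat{f}(u) = \sum_{w \in \F_q^v} \chi(u \cdot w)\, f(w).$$

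Next, I would specialize to $f(w) = z^{\wt(w)}$, for which $\sum_{c \in \C} f(c) = A(z)$ by definition. Because $f$ factors across coordinates, so does $\hat{f}$. Coordinate by coordinate, the inner sum contributes $1 + (q-1) z$ when $u_i = 0$, and $1 - z$ when $u_i \neq 0$, the latter using the standard identity $\sum_{a \in \F_q^*} \chi(u_i a) = -1$. Collecting contributions across all $v$ coordinates yields the clean formula
$$\hat{f}(u) = \bigl(1 + (q-1) z\bigr)^{v - \wt(u)} (1 - z)^{\wt(u)}.$$

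Then I would apply Poisson summation to the subgroup $\C^\perp \subseteq \F_q^v$, whose annihilator under the pairing $(u,w) \mapsto \chi(u \cdot w)$ is precisely $\C$:
$$\sum_{c' \in \C^\perp} f(c') \;=\; \frac{|\C^\perp|}{q^v} \sum_{c \in \C} \hat{f}(c).$$
The left side is $A^\perp(z)$. On the right, pulling out the factor $(1+(q-1)z)^v$ leaves the remaining sum equal to $A\bigl((1-z)/(1+(q-1)z)\bigr)$, and using $|\C^\perp| = q^{v-\kappa}$ produces the prefactor $q^{-\kappa}$, which matches the claimed identity.

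The main technical point is the Poisson summation formula itself, whose proof rests on the orthogonality relations for characters of $\F_q^v$ together with the identification of $\C$ as the annihilator of $\C^\perp$. Once that machinery is set up, the rest is a routine algebraic manipulation driven by the multiplicative structure of $z^{\wt(\cdot)}$ across coordinates, so I do not expect any substantive combinatorial obstacle.
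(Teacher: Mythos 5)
Your proof is correct and is essentially the standard character-theoretic argument for the MacWilliams identity; the paper itself gives no proof, citing van Lint, whose proof is the same computation (the character sum over $\C$ that detects membership in $\C^\perp$ is just your Poisson summation step written out). Nothing to add.
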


\subsection{Designs from linear codes}\label{sec-dfc}

Let $\C$ be a linear code, and let $(A_0, A_1, \ldots, A_v)$ be its weight distribution. For each $k$ with $A_k \ne 0$, by $\cB_k$ we denote the set of supports of all codewords with Hamming weight $k$ in $\C$, where the coordinates of a codeword are indexed by $(0,1,2, \ldots, v-1)$. Let $V = \{0,1,2, \ldots, v-1\}$ be the point set. The pair $(V, \cB_k)$ might be a $t$-$(v,k,\lambda)$ design for some positive integers $\lambda$ and $t$. In this case, we call this design a 
{\em support design} of the code $\C$, and say that the code $\C$ holds a $t$-$(v,k,\lambda)$ design.  

In the literature, a lot of progress has been made in constructing designs from the codewords of a fixed weight in some linear codes (for example, see~\cite{AK92,JT91,KP95,KP03,Ton98,Ton07,TDX19,Ding19,DT20,XLW20,TD21}). There are two general methods, which specify certain sufficient conditions for constructing $t$-designs from linear codes. The first method, given by the following theorem, was developed by Assmus and Mattson~\cite{AK92,HP03}. 

\begin{theorem}[Assmus-Mattson Theorem]\cite{AK92,HP03}\label{thm-am}
Let $\C$ be a $[v,\kappa,d]$ code over $\F_q$. Let $d^\perp$ denote the minimum distance of $\C^\perp$. 
Let $w$ be the largest integer satisfying $w \leq v$ and 
$$ 
w-\left\lfloor  \frac{w+q-2}{q-1} \right\rfloor <d. 
$$ 
Define $w^\perp$ analogously using $d^\perp$. Let $(A_i)_{i=0}^v$ and $(A_i^\perp)_{i=0}^v$ denote 
the weight distribution of $\C$ and $\C^\perp$, respectively. Fix a positive integer $t$ with $t<d$, and 
let $s$ be the number of $i$ with $A_i^\perp \neq 0$ for $0 \leq i \leq v-t$. Suppose $s \leq d-t$. Then 
\begin{itemize}
\item the codewords of weight $i$ in $\C$ hold a $t$-design provided $A_i \neq 0$ and $d \leq i \leq w$, and 
\item the codewords of weight $i$ in $\C^\perp$ hold a $t$-design provided $A_i^\perp \neq 0$ and 
         $d^\perp \leq i \leq \min\{v-t, w^\perp\}$. 
\end{itemize}
\end{theorem}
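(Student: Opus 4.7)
The plan is to prove the Assmus--Mattson theorem by analyzing, for each $t$-subset $T$ of coordinate positions, the shortened subcode on $T$ and applying the MacWilliams identity to relate its weight distribution to that of $\C^\perp$. The supports of weight-$i$ codewords in $\C$ form a $t$-design exactly when, for every $t$-subset $T$, the number of weight-$i$ codewords whose support contains $T$ depends only on $i$ and $t$, not on $T$ itself; the same reformulation applies to $\C^\perp$. So the goal reduces to establishing this $T$-independence under the stated hypotheses.

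First I would fix a $t$-subset $T \subseteq \{0,1,\ldots,v-1\}$ and, for each $\mathbf{u} \in \F_q^T$, introduce the affine set $\C_T(\mathbf{u}) = \{c \in \C : c|_T = \mathbf{u}\}$, with the shortened subcode $\C_T = \C_T(\mathbf{0})$. By a straightforward inclusion--exclusion, knowing $|\{c \in \C_T(\mathbf{u}) : \wt(c)=i\}|$ for all $\mathbf{u}$ determines the number of weight-$i$ codewords of $\C$ whose support intersects $T$ in each prescribed pattern, and in particular the number containing $T$. Next I would invoke Lemma~\ref{lem-MI} applied to $\C_T$ (viewed as a code of length $v-t$ after deleting the zero coordinates on $T$), which expresses its weight enumerator as a $q$-ary Krawtchouk transform of a suitable weight enumerator attached to $\C^\perp$ and the coset of $T$-characters. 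The same applies to each coset $\C_T(\mathbf{u})$ via an additive shift.

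The crux of the argument is then linear algebra. The numbers $A_i^\perp$ for $0 \leq i \leq v-t$ are the only data from $\C^\perp$ that enter into the Krawtchouk expansion, and by hypothesis at most $s$ of them are nonzero. Restricting to weights $i$ with $d \leq i \leq w$ in $\C$, the system of equations expressing $|\{c \in \C_T : \wt(c)=i\}|$ in terms of these $A_j^\perp$ is a Vandermonde-type system whose solvability and $T$-invariance are guaranteed by the inequality $s \leq d-t$: there are enough vanishing weights on the $\C$-side (below $d$) to pin down the weight counts purely from global data independent of $T$. The role of the parameter $w$ is to delimit the weight range in which a support in $\binom{V}{i}$ can correspond to at most one scalar multiple of a codeword (this is exactly what the constraint $w - \lfloor(w+q-2)/(q-1)\rfloor < d$ ensures), so that counts of weight-$i$ codewords translate cleanly into counts of supports of size $i$ after dividing by $q-1$.

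The main obstacle I expect is formulating the generalized MacWilliams identity for the cosets $\C_T(\mathbf{u})$ cleanly enough to read off the Vandermonde structure, and then verifying that the $s \leq d-t$ hypothesis actually forces $T$-independence rather than merely consistency of the system. Once the primal statement is established, the dual statement concerning $\C^\perp$ follows by the symmetric argument with the roles of $\C$ and $\C^\perp$, and of $(d,w)$ and $(d^\perp,w^\perp)$, interchanged; the hypothesis $s \leq d-t$ is symmetric in the sense that it bounds the number of nonzero $A_i^\perp$ but is applied against the primal minimum distance $d$, which is what makes both conclusions follow from the single assumption.
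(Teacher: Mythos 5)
The paper does not prove this statement: it is stated as the classical Assmus--Mattson theorem with citations to \cite{AK92,HP03} and no argument is given, so there is no in-paper proof to compare yours against. Judged on its own, your outline follows the standard textbook route (shortening/puncturing on a $t$-set $T$, the MacWilliams transform, and a Vandermonde system forced by $s\le d-t$), and the architecture is right, including the correct identification of the role of $w$: for $i\le w$ two weight-$i$ codewords with the same support are scalar multiples of one another, so codeword counts convert to support counts after dividing by $q-1$.

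That said, as written the crux is asserted rather than proved, and the bookkeeping is slightly off in a way you would need to repair. The Vandermonde argument does not directly express $|\{c\in\C_T:\wt(c)=i\}|$ in terms of the global $A_j^\perp$; rather, one works with the punctured code $\C^T$ (minimum distance at least $d-t\ge s$) and its dual, the shortened dual $(\C^\perp)_T$, whose nonzero weights form a subset of the at most $s$ weights of $\C^\perp$ not exceeding $v-t$. The first $s$ power moments of the pair $\bigl(\C^T,(\C^\perp)_T\bigr)$ then involve only $A_0(\C^T)=1$ and known data on one side, and give a nonsingular Vandermonde system for the at most $s$ unknowns $A_{w_j}\bigl((\C^\perp)_T\bigr)$ with right-hand sides independent of $T$; it is therefore the $\C^\perp$ half of the theorem that is established first (via inclusion--exclusion over subsets of $T$), and the $\C$ half then follows by applying Lemma~\ref{lem-MI} to $\C^T$ and relating the weight distribution of the punctured code to the number of weight-$i$ codewords meeting $T$ in each pattern. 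Your closing claim that the dual statement follows ``by the symmetric argument'' should also be treated with care: the hypothesis $s\le d-t$ is not symmetric under exchanging $\C$ and $\C^\perp$, and both conclusions must be extracted from the single asymmetric hypothesis as above. None of this is a fatal flaw, but the proposal remains a plan rather than a proof until the power-moment computation and the inclusion--exclusion step are actually carried out.
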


We note that the key sufficient condition in the Assmus-Mattson theorem is that $s \le d - t$, where $s$ denotes the number of nonzero Hamming weights in the dual code $\C^\perp$, and $d$ denotes the minimum Hamming weight in the code $\C$. In other words, $d-s$ is the upper bound for $t$ when a linear code $\C$ holds a $t$-design. Recently, the Assmus-Mattson theorem was explicitly employed to construct infinite families of $t$-designs, see~\cite{Ding182,DL17}.

The other method is to consider the automorphism group of a linear code. 
If the automorphism group $\Aut(\C)$ of a linear code is $t$-transitive, then the codewords of weight greater than $t$ holds a $t$-design. The set of coordinate permutations that map a code $\C$ to itself forms a group. This group is called the {\em permutation automorphism group} of $\C$ and is denoted by $\PAut(\C)$. The {\em automorphism group} of a linear code $\C$, denoted by $\Aut(\C)$, is the set of maps of the form $DP\gamma$,
where $D$ is a diagonal matrix, $P$ is a permutation matrix, and $\gamma$ is an automorphism of $\F_q$, that map $\C$ to itself. Clearly, we have $\PAut(\C) \leqslant \Aut(\C)$. The automorphism group $\Aut(\C)$ is said to be {\em $t$-transitive} if for every pair of $t$-element ordered subsets of the index set $V=\{0,1,...,v-1\}$, there is an element $DP \gamma$ of the automorphism group $\Aut(\C)$, such that its permutation part $P$ sends the first set to the second set. The following theorem summarizes this approach, whose proof can be found in~\cite[p. 308]{HP03}. 

\begin{theorem}\cite{HP03}\label{thm-aut}
Let $\C$ be a linear code of length $n$ over $\F_q$ where $\Aut(\C)$ is $t$-transitive. Then the codewords of any weight $i \geq t$ of $\C$ hold a $t$-design.
\end{theorem}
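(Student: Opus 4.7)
The plan is to deduce the $t$-design property for the supports $\cB_i$ of weight-$i$ codewords directly from the $t$-transitivity of $\Aut(\C)$, by showing that the group action of $\Aut(\C)$ on codewords descends, via the permutation part, to an action on $\cB_i$ that is transitive on $t$-subsets of the index set $V = \{0, 1, \ldots, n-1\}$.

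The first step is to analyze how a typical element $DP\gamma \in \Aut(\C)$ transforms the support of a codeword. Since $D$ is an invertible diagonal matrix, multiplication by $D$ rescales each coordinate by a nonzero element of $\F_q$ and hence preserves the set of nonzero positions. The field automorphism $\gamma$ fixes $0$ and maps $\F_q^*$ to $\F_q^*$, so applying $\gamma$ entrywise also preserves the set of nonzero positions. The permutation matrix $P$ merely relabels coordinates. Combining these observations gives the key identity $\support(DP\gamma(c)) = P(\support(c))$ for every $c \in \C$, and Hamming weight is preserved throughout. Consequently, each $DP\gamma \in \Aut(\C)$ induces a well-defined permutation $B \mapsto P(B)$ of $\cB_i$ via its permutation part alone.

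The second step is an orbit-counting argument. Fix $i \geq t$ and, for any $t$-subset $T \subseteq V$, let $\lambda_T := |\{B \in \cB_i : T \subseteq B\}|$. Given any two $t$-subsets $T_1, T_2$ of $V$, the $t$-transitivity of $\Aut(\C)$ supplies some $DP\gamma \in \Aut(\C)$ whose permutation part $P$ sends $T_1$ to $T_2$. The induced bijection $B \mapsto P(B)$ of $\cB_i$ then restricts to a bijection between $\{B \in \cB_i : T_1 \subseteq B\}$ and $\{B \in \cB_i : T_2 \subseteq B\}$, since $T_1 \subseteq B$ if and only if $T_2 = P(T_1) \subseteq P(B)$. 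Hence $\lambda_{T_1} = \lambda_{T_2}$, and this common value $\lambda$ makes $(V, \cB_i)$ a $t$-$(n, i, \lambda)$ design, as required.

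The argument is essentially transitivity plus orbit counting, so there is no deep obstacle. The one subtlety worth spelling out is that $\cB_i$ is a \emph{set} rather than a multiset even though several weight-$i$ codewords may share the same support; this does not affect the reasoning, because the action on $\cB_i$ factors through the permutation $P$ only, which is genuinely a bijection of $V$ and therefore of $\cB_i$, independently of how many codewords map to a given support.
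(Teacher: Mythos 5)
Your proof is correct, and it is essentially the standard argument: the paper itself gives no proof of this theorem but defers to \cite[p.~308]{HP03}, where the same reasoning appears — the permutation part of each automorphism permutes the supports of weight-$i$ codewords, and $t$-transitivity forces the number of blocks containing a $t$-subset to be independent of the subset. Your explicit remarks on why the diagonal and Galois parts do not disturb supports, and on $\cB_i$ being a set rather than a multiset, are exactly the right points to make.
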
 

To apply this method, one has to find a subgroup of the automorphism group $\Aut(\C)$ that is 
$t$-transitive on $\C$. Note that it is in general very hard to determine the parameters of such $t$-design supported by a linear code. Some examples of $t$-designs obtained by this method can be found, for example, in~\cite{CMOT06,LTW12,DZ17,DW20,DW22}. The following result states that the dual code $\C^\perp$ also holds $t$-designs~\cite{MS771} if $\C$ does so. 

\begin{proposition}\cite{MS771}\label{prop-dualdesign}
Let $\C$ be an $[n, \kappa, d]$ binary linear code with $\kappa>1$, such that for each weight $w>0$ the supports of the codewords of weight $w$ form a $t$-design, where $t<d$. Then the supports of the codewords of each nonzero weight in $\C^\perp$ also form a $t$-design.   
\end{proposition}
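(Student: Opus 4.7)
The plan is to show that for any $t$-subset $T$ of the coordinate set $V = \{0, 1, \ldots, n-1\}$ and any weight $w > 0$, the number of codewords of $\C^\perp$ of weight $w$ whose supports contain $T$ depends only on $t$ and $w$; this is precisely the $t$-design conclusion. The natural vehicle is the two-block split weight enumerator of $\C$ and $\C^\perp$ with respect to the partition $V = T \sqcup (V \setminus T)$, combined with the split version of the MacWilliams identity.

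First, I would translate the design hypothesis into a statement about split enumerators. For $0 \le j \le t$ and $0 \le i \le n$, set
\[
A_{j,i}(T) := \big|\{c \in \C : \wt(c) = i,\ |\support(c) \cap T| = j\}\big|.
\]
The hypothesis says that for every $t$-subset $T' \subseteq V$ and every $i > 0$, the number of codewords of weight $i$ in $\C$ whose supports contain $T'$ equals a constant $\lambda_i$ independent of $T'$; by the standard derivation of $\lambda_s$ from $\lambda_t$, the analogous count for any $s$-subset with $s \le t$ also depends only on $(s, i)$. A short inclusion–exclusion over $R \subseteq T \setminus S$ then expresses $|\{c \in \C : \wt(c) = i,\ \support(c) \cap T = S\}|$ as a function of $(|S|, t, i)$, and summing over $j$-subsets $S$ of $T$ gives $A_{j,i}(T) = \binom{t}{j}\,f(j, t, i)$, a quantity depending only on $(j, t, i)$. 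Consequently the split weight enumerator
\[
W_\C^T(y_1, y_2) := \sum_{c \in \C} y_1^{\wt(c|_T)}\, y_2^{\wt(c|_{V \setminus T})}
\]
depends only on $|T| = t$ and not on the particular $T$ chosen.

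Next, I would invoke the split MacWilliams identity, a two-block version of Lemma~\ref{lem-MI} obtained by performing the MacWilliams transform on each block of variables separately. It expresses $W_{\C^\perp}^T$ as an invertible linear transform of $W_\C^T$, so $W_{\C^\perp}^T$ likewise depends only on $t$. Extracting the coefficient of $y_1^t y_2^{w - t}$ then yields the number of weight-$w$ codewords $c \in \C^\perp$ with $\support(c) \supseteq T$, and this number depends only on $(t, w)$. Since $T$ was an arbitrary $t$-subset, the supports of the weight-$w$ codewords of $\C^\perp$ form a $t$-design. The main technical hurdle is the inclusion–exclusion bookkeeping in the first step, which must convert "counts of codewords whose supports contain a fixed subset" into "counts indexed by the exact intersection $\support(c) \cap T$"; the hypothesis $t < d$ enters only to guarantee that the $t$-design condition on $\C$ is non-vacuous. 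Once the $T$-independence of $W_\C^T$ is established, the split MacWilliams transform transports it mechanically to $\C^\perp$ and the design property drops out by reading off the appropriate coefficient.
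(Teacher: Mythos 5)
The paper does not actually prove this proposition---it is quoted from MacWilliams and Sloane \cite{MS771} without proof---so there is no in-paper argument to compare against; judged on its own, your proof is correct and is essentially the classical argument from the cited reference. Each step checks out: the design hypothesis (together with the standard passage from $\lambda_t$ to $\lambda_s$ for $s\le t$, which is legitimate because every nonzero weight of $\C$ exceeds $t$ by $t<d$) plus inclusion--exclusion over $R\subseteq T\setminus S$ does show that the joint distribution of $\bigl(\wt(c|_T),\wt(c|_{V\setminus T})\bigr)$ over $c\in\C$ depends only on $|T|=t$; the split MacWilliams identity then transfers this $T$-independence to $\C^\perp$; and the coefficient of $y_1^{t}y_2^{w-t}$ is exactly the number of weight-$w$ codewords of $\C^\perp$ covering $T$. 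The textbook proof packages the same idea slightly differently: it punctures $\C$ on $T$ (where $t<d$ guarantees the puncturing is injective, so $\dim\C^T=\kappa$), observes that the weight distribution of $\C^T$ is $T$-independent by the design hypothesis, applies the ordinary MacWilliams identity to get the same for $(\C^T)^\perp=(\C^\perp)_T$, and finishes with inclusion--exclusion on the shortened dual. Your split-enumerator route avoids the puncturing/shortening duality at the cost of invoking the two-block MacWilliams transform, and it makes transparent that $t<d$ is needed only to make the hypothesis on $\C$ meaningful and to justify the $\lambda_s$ step; both arguments are equally rigorous. One cosmetic caveat: for weights $w<t$ of $\C^\perp$ (if any exist) the ``$t$-design'' conclusion is vacuous or degenerate, but that is an imprecision in the statement of the proposition itself, not in your proof.
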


\section{A type of affine-invariant codes and their designs}\label{sec-main} 

In this section, we first recall a type of affine-invariant codes investigated in~\cite{Ding18}, together with some known results therein, and then show that a family of the codes supports 
Steiner systems $S(2, 4, 2^m)$ for all $m \equiv 0 \pmod{4} \geq 4$. 

\subsection{A type of affine-invariant codes}\label{sec-maincodes}

An {\em affine-invariant} code over $\F_q$ is an extended primitive cyclic code $\overline{\C}$ of length $q^m$ such that the {\em affine permutation group}, denoted by $\AGL(1,q^m)$, is a subset of $\PAut(\C)$, where $\AGL(1,q^m)$ is defined by 
$$
\AGL(1,q^m) = \{ \sigma_{(a,b)} (y) = ay + b: \ a \in \F_{q^m}^*, \ b \in \F_{q^m} \}.
$$
Note that $\AGL(1,q^m)$ acts doubly transitively on $\F_{q^m}$. It then follows from Theorem~\ref{thm-aut} that an affine-invariant code $\C$ holds $2$-designs~\cite{Ding18}. Besides several previously known affine-invariant codes, including the extended narrow-sense primitive BCH
codes~\cite{DZ17}, the classical generalized Reed-Muller codes, and a family of newly generalized Reed-Muller codes~\cite{DLX18}, there exists another type of affine-invariant codes~\cite{Ding18}, which are recalled below.

Let $m \ge 2 $ be a positive integer. Define $\overline{m} = \lfloor m/2 \rfloor$ and $M = \{1, 2, \ldots, \overline{m}\}$. Let $E$ be any nonempty subset of $M$. Define
$$
g_E (x) = \M_\alpha (x) \lcm \{ \M_{\alpha^{1 + 2^e}}(x): \ e \in E\},
$$
where $\alpha$ is a generator of $\F_{2^m}^*$, $\M_{\alpha^i} (x)$ denotes the minimal polynomial of $\alpha^i$ over $\F_2$, and $\lcm$ denotes the least common multiple of a set of polynomials. We denote by $\C_E$ the binary cyclic code of length $n$ with the generator polynomial $g_E(x)$, where $n = 2^m - 1$. It was proved in~\cite[Theorems 13, 14]{Ding18} that the generator polynomial is given by  
\begin{equation}\label{eqn-ge}
g_E (x) = \M_\alpha(x) \prod_{e \in E} \M_{\alpha^{1 + 2^e}}(x) ,
\end{equation}
and the extended code $\overline{\C_E}$ is affine-invariant. Therefore, by Theorem~\ref{thm-aut} and Proposition~\ref{prop-dualdesign}, the supports of the codewords of each weight $k$ in $\overline{C_E}$ and $\overline{C_E}^\perp$ form a $2$-design, if $\overline{A}_k \ne 0$ and $\overline{A}_k^\perp \ne 0$, respectively. We remark here that, to construct $2$-designs from these affine-invariant codes and explicitly determine the parameters of these designs, it still remains
to determine the weight distributions of these codes. Only a few special cases of these codes are possible. Ding~\cite{Ding18} considered one special case that $E = \{ e \}$, where $1 \leq e \leq \overline{m} = \lfloor m/2 \rfloor$. For other special cases of $E$ and the designs from the corresponding codes, see~\cite{Ding182,DZ17,Kas69}, and a summary in~\cite{Ding18}. The following theorem~\cite{Ding18} determines the parameters and the weight distributions of the codes $\overline{\C_E}$ and $\overline{\C_E}^\perp$.  

\vspace{.25cm}
\begin{theorem}~\cite[Theorem 17]{Ding18}\label{thm-wd}
  Let $m \ge 4$ and $1 \leq e \le \overline{m}$. 
\begin{itemize}
  \item[(a)] When $m/\gcd(m,e)$ is odd, define $h=(m-\gcd(m,e))/2$. Then $\overline{\C_{E}}^\perp$ 
has parameters $[2^m, 2m+1, 2^{m-1}-2^{m-1-h}]$, and the weight
distribution in Table~\ref{tab-aaCG1jan18}. 
The parameters of $\overline{\C_{E}}$ are $[2^m, 2^m-1-2m, \overline{d}]$, 
where 
$$
\overline{d} = \left\{ \begin{array}{ll}
                             4 & \mbox{ if } \gcd(e,m)>1; \\
                             6 & \mbox{ if } \gcd(e,m)=1.
\end{array}
\right.
$$

\item[(b)] When $m$ is even and $e=m/2$, $\overline{\C_{E}}^\perp$ has parameters $[2^m, 1+3m/2, 2^{m-1}-2^{(m-2)/2}]$ and the weight
distribution in Table \ref{tab-aaCG2jan18}. 
The code $\overline{\C_{E}}$ has parameters $[2^m, 2^m-1-3m/2, 4]$. 

\item[(c)] When $m/\gcd(m,e)$ is even and $1 \leq e < m/2$, $\overline{\C_{E}}^\perp$ has parameters
$$[2^m, \, 2m+1,\, 2^{m-1}-2^{(m+\ell-2)/2}]$$ and the weight 
distribution in Table \ref{tab-aaMinejan18}, where $\ell=2\gcd(m, e)$, and $\overline{\C_{E}}$ has parameters $[2^m, 2^m-1-2m, \overline{d}]$, where 
$$
\overline{d} = \left\{ \begin{array}{ll}
                             4 & \mbox{ if } \gcd(e,m)>1; \\
                             6 & \mbox{ if } \gcd(e,m)=1.
\end{array}
\right.
$$
\end{itemize}
\end{theorem}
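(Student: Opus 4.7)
The plan is to compute the weight distribution of the dual $\overline{\C_E}^\perp$ first, using its description as a family of Boolean functions of algebraic degree at most $2$, and then transfer the result to $\overline{\C_E}$ through the MacWilliams identity (Lemma~\ref{lem-MI}). The minimum distances in each case will follow from the resulting weight enumerator together with an affine-invariant BCH-style lower bound. To set up the computation, I would first give a trace parametrization of $\overline{\C_E}^\perp$: since $\C_E$ has defining set $C_1 \cup C_{1+2^e}$ and the code is extended by an overall parity bit, standard duality for primitive binary cyclic codes yields
\[
\overline{\C_E}^\perp = \Bigl\{\, \bigl( \mathrm{Tr}(a x + b x^{1+2^e}) + \epsilon \bigr)_{x \in \F_{2^m}} : a,b \in \F_{2^m},\ \epsilon \in \F_2 \,\Bigr\},
\]
where $\mathrm{Tr}$ is the absolute trace from $\F_{2^m}$ to $\F_2$. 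In cases (a) and (c) both $C_1$ and $C_{1+2^e}$ have size $m$ and are distinct, giving $\dim \overline{\C_E}^\perp = 2m+1$; in case (b) the coset of $1+2^{m/2}$ has size $m/2$, and the dimension drops to $1+3m/2$. The dimensions of $\overline{\C_E}$ stated in the theorem then follow by subtraction from $2^m$.

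Next, a Walsh-transform computation. Let $S(a,b) = \sum_{x \in \F_{2^m}} (-1)^{\mathrm{Tr}(ax + b x^{1+2^e})}$. The Hamming weight of the codeword indexed by $(a,b,\epsilon)$ equals $2^{m-1} - \tfrac{1}{2}(-1)^\epsilon S(a,b)$, so the weight distribution of $\overline{\C_E}^\perp$ is determined by the joint distribution of $S(a,b)$ over $\F_{2^m} \times \F_{2^m}$. For $b \ne 0$ the map $Q_b(x) = \mathrm{Tr}(b x^{1+2^e})$ is an $\F_2$-quadratic form on $\F_{2^m}$; its polarization has radical equal to the $\F_{2^m}$-kernel of the linearized polynomial $L_b(y) = b^{2^e} y^{2^{2e}} + b y$. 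Writing $r(b) = m - \dim_{\F_2} \ker L_b$ for the rank of $Q_b$, the standard theory of symmetric bilinear forms in characteristic $2$ gives $S(a,b) \in \{0,\, \pm 2^{(m+r(b))/2}\}$ for $b \ne 0$, with the value $0$ occurring precisely when $a$ fails a linear compatibility condition depending on $b$. The three cases in the theorem correspond exactly to the three possible rank spectra of $\{Q_b\}_{b \ne 0}$: when $m/\gcd(m,e)$ is odd (case (a)) only the rank $m - \gcd(m,e)$ occurs and $\overline{\C_E}^\perp$ is a three-weight code; when $e = m/2$ (case (b)) the exponent $1+2^{m/2}$ is the classical Gold bent exponent, so every nonzero $Q_b$ attains maximal rank $m$ and a two-weight distribution results; and when $m/\gcd(m,e)$ is even with $e < m/2$ (case (c)) two distinct nontrivial ranks occur, producing the five-value distribution in Table~\ref{tab-aaMinejan18}.

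Once the weight enumerator of $\overline{\C_E}^\perp$ is assembled, Lemma~\ref{lem-MI} delivers the full weight enumerator of $\overline{\C_E}$; the claim $\overline{d} \in \{4,6\}$ is then read off from the first nonzero coefficient, and the matching lower bound is supplied by the affine-invariant BCH bound applied to the augmented defining set $\{0\} \cup C_1 \cup C_{1+2^e}$, which contains four consecutive roots when $\gcd(e,m) > 1$ and absorbs a fifth root when $\gcd(e,m) = 1$. The main obstacle is the rank classification of the forms $Q_b$: computing $\dim_{\F_2} \ker L_b$ uniformly in $b \in \F_{2^m}^*$ reduces to counting solutions in $\F_{2^m}$ of norm equations of the shape $b^{2^e - 1} = c^{2^{2e} - 1}$, and the dichotomy between odd and even $m/\gcd(m,e)$ is precisely the classical Kasami/Gold spectral split. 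I would invoke the well-established Walsh-spectrum computations for these exponents (notably the Kasami results cited in the excerpt as~\cite{Kas69}) to shortcut the most laborious enumeration, and restrict attention to verifying that the normalizations of rank, sign, and multiplicity match the tables of Theorem~\ref{thm-wd}.
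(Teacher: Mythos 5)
This theorem is imported verbatim from \cite[Theorem 17]{Ding18} and the paper you are checking against offers no proof of it at all --- it is stated as a citation and used as a black box. So there is nothing in this paper to compare your argument with; what you have written is, in outline, the standard (and essentially the original) route: parametrize $\overline{\C_E}^\perp$ by $\bigl(\mathrm{Tr}(ax+bx^{1+2^e})+\epsilon\bigr)_{x\in\F_{2^m}}$ via Delsarte duality, reduce the weight distribution to the rank spectrum of the quadratic forms $Q_b(x)=\mathrm{Tr}(bx^{1+2^e})$ through the kernel of $L_b(y)=b^{2^e}y^{2^{2e}}+by$, split into the Gold/Kasami cases according to the parity of $m/\gcd(m,e)$, and recover $\overline{\C_E}$ by MacWilliams. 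That is a sound and complete plan, and your identification of the three cases with the three possible rank spectra is correct.

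Two concrete corrections. First, your Walsh-value formula is off: with $r(b)=m-\dim_{\F_2}\ker L_b$ the \emph{rank}, the exponential sum satisfies $S(a,b)\in\{0,\pm 2^{m-r(b)/2}\}=\{0,\pm 2^{(m+\dim\ker L_b)/2}\}$, not $\pm 2^{(m+r(b))/2}$; as written, the bent case $r(b)=m$ of part (b) would give $|S(a,b)|=2^m$, i.e.\ a constant function, which is absurd. You flag that normalizations need checking, but this particular one must be fixed for the weights $2^{m-1}\pm 2^{(m+\ell-2)/2}$ (with $\ell=2\gcd(m,e)=\dim\ker L_b$) and $2^{m-1}\pm 2^{m-1-h}$ (with $h=(m-\gcd(m,e))/2$) to come out as in Tables~\ref{tab-aaCG1jan18}--\ref{tab-aaMinejan18}. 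Second, the closing appeal to a ``BCH-style lower bound with four consecutive roots'' is both unnecessary and imprecise: the defining set $\{0\}\cup C_1\cup C_{1+2^e}$ always contains the three consecutive elements $0,1,2$, giving only $\overline{d}\ge 4$, and whether $3$ lies in $C_{1+2^e}$ depends on $e$ in a way that does not track the dichotomy $\gcd(e,m)=1$ versus $\gcd(e,m)>1$. Since the MacWilliams transform already yields the full weight enumerator of $\overline{\C_E}$, the clean way to finish is simply to verify $\overline{A}_1=\overline{A}_2=\overline{A}_3=0$ and to compute $\overline{A}_4$ (and, when it vanishes, $\overline{A}_5$ and $\overline{A}_6$) directly from the transformed enumerator, exactly as this paper does later in Lemma~\ref{lem-wdext} for the case it needs.
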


\begin{table}[ht]
\center 
\caption{Weight distribution of $\overline{\C_E}^\perp$ (a)}\label{tab-aaCG1jan18} 
{
\begin{tabular}{ll}
\hline
Weight $w$    & No. of codewords $A_w$  \\ \hline
$0$                                                        & $1$ \\ 
$2^{m-1}-2^{m-1-h}$           & $(2^m-1)2^{2h}$ \\ 
$2^{m-1}$                             & $(2^m-1)(2^{m+1}-2^{2h+1}+2)$ \\ 
$2^{m-1}+2^{m-1-h}$           & $(2^m-1)2^{2h}$ \\  
$2^m$                           &  $1$ \\ \hline
\end{tabular}
}
\end{table}

\begin{table}[ht]
\center 
\caption{Weight distribution of $\overline{\C_E}^\perp$ (b)}\label{tab-aaCG2jan18}
{
\begin{tabular}{ll}
\hline
Weight $w$    & No. of codewords $A_w$  \\ \hline
$0$                                                        & $1$ \\ 
$2^{m-1}-2^{(m-2)/2}$          & $(2^{m/2}-1)2^{m}$ \\ 
$2^{m-1}$                               & $2^{m+1}-2$ \\ 
$2^{m-1}+2^{(m-2)/2}$          & $(2^{m/2}-1)2^{m}$ \\ 
$2^m$                          & $1$ \\ \hline
\end{tabular}
}
\end{table}

\begin{table}[ht]
\center 
\caption{Weight distribution of $\overline{\C_E}^\perp$ (c)}\label{tab-aaMinejan18}
{
\begin{tabular}{ll}
\hline
Weight $w$    & No. of codewords $A_w$  \\ \hline
$0$                                                        & $1$ \\ 
$2^{m-1}-2^{(m+\ell-2)/2}$          & $2^{m-\ell} (2^m-1)/(2^{\ell/2}+1)$ \\ 
$2^{m-1}-2^{(m-2)/2}$          & $2^{(2m+\ell)/2} (2^m-1)/(2^{\ell/2}+1)$ \\ 
$2^{m-1}$                               & $2( (2^{\ell/2} -1)2^{m-\ell} +1 )(2^m - 1)$ \\ 
$2^{m-1}+2^{(m-2)/2}$          & $2^{(2m+\ell)/2} (2^m-1)/(2^{\ell/2}+1)$ \\ 
$2^{m-1}+2^{(m+\ell-2)/2}$          & $2^{m-\ell} (2^m-1)/(2^{\ell/2}+1)$ \\ 
$2^m$                               & $1$ \\ \hline 
\end{tabular}
}
\end{table}

\subsection{Designs from these affine-invariant codes}\label{sec-maindesigns}

With Theorem~\ref{thm-wd}, it is possible to formulate the parameters of the $2$-designs from the codes $\overline{\C_E}^\perp$ by (\ref{eqn-para}) in the following theorem~\cite[Theorem 18]{Ding18}. 

\vspace{.25cm}
\begin{theorem}~\cite[Theorem 18]{Ding18}\label{thm-designdual}
  Let $m \ge 4$ and $1 \leq e \le \overline{m}$. 
\begin{itemize}
    
\item[(a)] When $m/\gcd(m,e)$ is odd, define $h=(m-\gcd(m,e))/2$. Then $\overline{\C_{E}}^\perp$ 
holds a $2$-$(2^m, k, \lambda)$ design for the following pairs $(k, \lambda)$: 
\begin{itemize}
\item $(k, \lambda)=\left(2^{m-1} \pm 2^{m-1-h},\  (2^{2h-1}\pm 2^{h-1})(2^{m-1} \pm 2^{m-1-h}-1)\right)$.  
\item $(k, \lambda)=\left(2^{m-1}, \  (2^{m-1}-1)(2^{m}-2^{2h}+1) \right).$ 
\end{itemize}

\item[(b)] When $m$ is even and $e=m/2$, $\overline{\C_{E}}^\perp$ 
holds a $2$-$(2^m, k, \lambda)$ design for the following pairs $(k, \lambda)$: 
\begin{itemize}
\item $(k, \lambda)=\left(2^{m-1} \pm 2^{(m-2)/2}, \ 
       2^{(m-2)/2}(2^{m/2} - 1)(2^{(m-2)/2} \pm 1) \right).$ 
\item $(k, \lambda)=\left(2^{m-1}, \ 2^{m-1}-1\right).$ 
\end{itemize}

\item[(c)] When $m/\gcd(m,e)$ is even and $1 \leq e < m/2$, $\overline{\C_{E}}^\perp$ 
holds a $2$-$(2^m, k, \lambda)$ design for the following pairs $(k, \lambda)$: 
\begin{itemize}
\item $(k, \lambda)=\left(2^{m-1} \pm 2^{(m+\ell-2)/2}, \ 
            \frac{(2^{m-1} \pm 2^{(m+\ell-2)/2})(2^{m-1} \pm 2^{(m+\ell-2)/2}-1)}{2^\ell(2^{\ell/2}+1)} \right),$ 
\item $(k, \lambda)=\left(2^{m-1} \pm 2^{(m-2)/2}, \ 
       \frac{2^{(m+\ell-2)/2}(2^{m/2} \pm 1)(2^{m-1} \pm 2^{(m-2)/2}-1)}{2^{\ell/2}+1} \right),$ 
\item $(k, \lambda)=\left(2^{m-1}, \ ( (2^{\ell/2} -1)2^{m-\ell} +1 )(2^{m-1} - 1) \right),$ 
\end{itemize}  
where $\ell=2\gcd(m, e)$. 
\end{itemize}
\end{theorem}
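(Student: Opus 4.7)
The plan is to observe that the theorem is essentially a parameter-counting consequence of the weight distribution already given in Theorem~\ref{thm-wd}, combined with the fact that $\overline{\C_E}^\perp$ is known to hold $2$-designs on every nonzero weight. So the work divides into two parts: first, explain why the supports do form a $2$-design, then compute $\lambda$ from the standard identity (\ref{eqn-para}).

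For the design structure, I would cite that $\overline{\C_E}$ is affine-invariant as recalled in Section~\ref{sec-maincodes}, so $\AGL(1,2^m)\le \PAut(\overline{\C_E})$ is $2$-transitive on the coordinate set $\F_{2^m}$. Theorem~\ref{thm-aut} then yields that the codewords of each nonzero weight in $\overline{\C_E}$ support a $2$-design; Proposition~\ref{prop-dualdesign} transfers this to $\overline{\C_E}^\perp$, so every nonzero weight in the dual supports a $2$-design as well. Hence for each weight $k$ appearing in Tables~\ref{tab-aaCG1jan18}--\ref{tab-aaMinejan18} we have a $2$-$(2^m,k,\lambda)$ design, and only $\lambda$ remains to be identified.

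The computation uses the fact that over $\F_2$ a codeword is determined by its support, so the number of blocks of weight $k$ equals $b = A_k^{\perp}$, the entry from the appropriate table. Specialising (\ref{eqn-para}) to $t=2$ and $v=2^m$ gives
\begin{equation*}
\lambda \;=\; \frac{A_k^{\perp}\,k(k-1)}{2^m(2^m-1)}.
\end{equation*}
I would now plug in the three tables in turn. For (a), with $k = 2^{m-1}\pm 2^{m-1-h}$ one factors $k = 2^{m-1-h}(2^h\mp(-1))$ so that the factor $2^m$ in the denominator cancels against $2^{2h}\cdot 2^{m-1-h}$ from the numerator, leaving $\lambda=(2^{2h-1}\pm 2^{h-1})(k-1)$; for $k=2^{m-1}$ the $(2^m-1)$ factors cancel immediately. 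Cases (b) and (c) are analogous, but will require factoring $k-1$ as a product $(2^{m/2}\pm 1)(2^{(m-2)/2}\pm 1)$ (for (b)) or through $2^{\ell/2}+1$ (for (c)) in order to cancel $2^m-1$ from the denominator; the identity
\begin{equation*}
2^{m-1}\pm 2^{(m-2)/2}-1 \;=\; (2^{m/2}\pm 1)(2^{(m-2)/2}\mp 1)+\text{small correction}
\end{equation*}
(verified by direct expansion) is the one to keep track of.

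The routine obstacle is just keeping signs and $\pm$ branches aligned during these cancellations; the only genuinely non-trivial check is in case (c), where three pairs of weights appear and the factor $(2^{\ell/2}+1)$ in the denominator of $A_k^{\perp}$ must be absorbed into $k-1$ via a careful factorisation. Once those factorisations are exhibited, the stated formulas fall out, and the integrality of each $\lambda$ serves as an internal consistency check. No new input beyond Theorem~\ref{thm-wd}, Theorem~\ref{thm-aut}, and Proposition~\ref{prop-dualdesign} is needed.
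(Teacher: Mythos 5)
Your proposal is correct and follows essentially the same route as the paper (and the cited source \cite{Ding18}): affine-invariance together with Theorem~\ref{thm-aut} and Proposition~\ref{prop-dualdesign} gives the $2$-design property for every nonzero weight of $\overline{\C_E}^\perp$, and each $\lambda$ is read off from Tables~\ref{tab-aaCG1jan18}--\ref{tab-aaMinejan18} via (\ref{eqn-para}) with $b=A_k^\perp$ (valid since binary codewords are determined by their supports). One small note: the factorisations you flag are exact, e.g. $2^{m-1}\pm2^{(m-2)/2}-1=(2^{m/2}\mp1)\bigl(2^{(m-2)/2}\pm1\bigr)$, so no ``small correction'' term is needed.
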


In~\cite{Ding18}, a special case ($2 \leq e \leq \overline{m}$ and $\gcd(m,e) = 2$) was considered to determine the weight distribution of the code $\overline{\C_E}$, and then Steiner systems $S(2,4,2^m)$ for $m \equiv 2 \pmod{4}$ were constructed. 

\begin{theorem}\cite[Theorem 21]{Ding18}\label{thm-main0}
Let $m \equiv 2 \pmod{4} \ge 4$, $2 \leq e \leq \lfloor m/2 \rfloor$, and $\gcd(m, e)=2$. Then 
the supports of the codewords of weight $4$ in $\overline{\C_E}$ form a $2$-$(2^m, 4, 1)$ 
design, i.e., a Steiner system $S(2, 4, 2^m)$. 
\end{theorem}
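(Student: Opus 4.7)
The plan is to exploit two facts already in hand: the code $\overline{\C_E}$ is affine-invariant, so the supports of its weight-$4$ codewords automatically form a $2$-design (via Theorem~\ref{thm-aut} applied to the $2$-transitive group $\AGL(1, 2^m)$), and the weight distribution of $\overline{\C_E}^\perp$ is fully known from Theorem~\ref{thm-wd}. The only remaining task is to compute the block count and verify $\lambda = 1$.

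Under the hypothesis $m \equiv 2 \pmod{4}$ with $\gcd(m,e) = 2$, the ratio $m/\gcd(m,e) = m/2$ is odd, placing us in part~(a) of Theorem~\ref{thm-wd} with $h = (m-2)/2$. Moreover $\gcd(m,e) = 2 > 1$ forces the minimum distance of $\overline{\C_E}$ to equal $4$, so weight-$4$ codewords do exist, and their supports form a $2$-$(2^m, 4, \lambda)$ design for some integer $\lambda \geq 1$. In a binary code, weight-$4$ codewords are in bijection with the $4$-subsets they support, so the block count is $A_4$, and (\ref{eqn-para}) gives $\lambda = 12\, A_4/(2^m(2^m-1))$. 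It therefore suffices to prove that $A_4 = 2^m(2^m-1)/12$.

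To compute $A_4$, I would apply the MacWilliams identity (Lemma~\ref{lem-MI}) to the weight enumerator of $\overline{\C_E}^\perp$ read off from Table~\ref{tab-aaCG1jan18}, and extract the coefficient of $z^4$ from the resulting $A(z)$, using $A_1 = A_2 = A_3 = 0$ (since $d(\overline{\C_E}) = 4$). Writing $N = 2^m$ and $a = 2^{m/2}$, the two side-weight contributions collapse via the identity $(1+z)^{N/2 \mp a}(1-z)^{N/2 \pm a} = (1 - z^2)^{N/2 - a}(1 \mp z)^{2a}$, so that $K_4(N/2 - a;\, N) = K_4(N/2 + a;\, N)$, where $K_4(i; N) := [z^4](1+z)^{N-i}(1-z)^i$ is the usual Krawtchouk polynomial; the remaining four terms (from weights $0$, $N/2$, $N/2 \pm a$, and $N$ in the dual) then combine into a single closed-form expression for $A_4$.

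The main obstacle is the algebraic bookkeeping: after multiplying each Krawtchouk value by its dual multiplicity and factoring out $N(N-1)/48$, the sum reduces to a quadratic-in-$N$ bracket of the form $4(N-2)(N-3) + 3(N-2)(3N+4) - N(5N-26)$, whose linear and constant coefficients must cancel exactly. This cancellation hinges on the arithmetic relation $a^2 = N$ (which reduces the two degree-four side-term contributions to an $N$-cubic in $a$-free variables) together with the precise middle-weight multiplicity $(N-1)(3N/2 + 2)$ supplied by Table~\ref{tab-aaCG1jan18}. The surviving $8N^2$ in the bracket yields $A_4 = N(N-1)/12$, hence $\lambda = 1$. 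A sanity check at the smallest admissible case $m = 6$ (where one expects $A_4 = 336$ on $v = 64$ points, matching the known Steiner system $S(2,4,64)$) is straightforward and provides useful confidence.
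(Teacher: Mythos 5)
Your proposal is correct, and I verified the key cancellation numerically and symbolically: with $N=2^m$, $a=2^{m/2}$, $h=(m-2)/2$, the bracket $4(N-2)(N-3)+3(N-2)(3N+4)-N(5N-26)$ does collapse to $8N^2$, giving $\overline{A}_4=2^m(2^m-1)/12$ and $\lambda=1$. The paper itself imports this statement from \cite[Theorem 21]{Ding18} without reproving it, but your route --- affine invariance for the $2$-design property, then MacWilliams applied to the dual weight distribution of Theorem~\ref{thm-wd}(a) to extract $\overline{A}_4$, then the counting identity (\ref{eqn-para}) --- is exactly the strategy the paper uses for its companion Theorem~\ref{thm-main} (via Corollary~\ref{coro-wddual} and Lemmas~\ref{lem-wdext}--\ref{lem-numwt}), only with case (a) in place of case (c).
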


We now deal with the special case for $m \equiv 0 \pmod{4}$. More precisely, we first determine the weight distribution of the code $\overline{\C_E}$ from that of the code $\overline{\C_E}^\perp$. This could be done by giving a corollary of Theorem~\ref{thm-wd}, and then deriving the weight distribution of the code $\overline{\C_E}$ by Lemma~\ref{lem-MI}. 

\begin{table}[ht]
\center 
\caption{Weight distribution of $\overline{\C_E}^\perp$}\label{tab-aaMinejan18WQ}
{
\begin{tabular}{ll}
\hline
Weight $w$    & No. of codewords $A_w$  \\ \hline
$0$                                                        & $1$ \\ 
$2^{m-1}-2^{(m+2)/2}$          & $2^{m-4} (2^m-1)/5$ \\ 
$2^{m-1}-2^{(m-2)/2}$          & $2^{m+2} (2^m-1)/5$ \\ 
$2^{m-1}$                               & $2( 3 \cdot 2^{m-4} +1 )(2^m - 1)$ \\ 
$2^{m-1}+2^{(m-2)/2}$          & $2^{m+2} (2^m-1)/5$ \\ 
$2^{m-1}+2^{(m+2)/2}$          & $2^{m-4} (2^m-1)/5$ \\ 
$2^m$                               & $1$ \\ \hline 
\end{tabular}
}
\end{table}

\begin{corollary}\label{coro-wddual}
Let $m \equiv 0 \pmod{4} \ge 4$ and $1 \leq e \le m/2$.  When $\gcd(m, e)=2$, 
$\overline{\C_{E}}^\perp$ has parameters
$$[2^m, \, 2m+1,\, 2^{m-1}-2^{(m+2)/2}]$$ and the weight 
distribution in Table \ref{tab-aaMinejan18WQ}, and $\overline{\C_{E}}$ has parameters $[2^m, 2^m-1-2m, 4]$.
\end{corollary}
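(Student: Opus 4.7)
The plan is to derive the corollary as a direct specialization of case (c) of Theorem~\ref{thm-wd} by plugging in $\gcd(m,e) = 2$. First I would verify that the hypotheses of case~(c) are met. Since $m \equiv 0 \pmod{4}$, the quotient $m/\gcd(m,e) = m/2$ is even. Moreover, $e = m/2$ would force $\gcd(m, m/2) = m/2$, so the condition $\gcd(m,e) = 2$ together with $m \geq 8$ rules this out and guarantees $1 \leq e < m/2$. Hence we are strictly inside case~(c), not case~(b).

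Next I would substitute $\ell = 2\gcd(m,e) = 4$ throughout the formulas of Theorem~\ref{thm-wd}(c) and Table~\ref{tab-aaMinejan18}. This simplifies the auxiliary quantities to $2^{\ell/2}+1 = 5$, $2^{\ell/2}-1 = 3$, $(m+\ell-2)/2 = (m+2)/2$, $(2m+\ell)/2 = m+2$, and $2^{m-\ell} = 2^{m-4}$. Inserting these values into the weight column and the frequency column of Table~\ref{tab-aaMinejan18} yields exactly the six rows of Table~\ref{tab-aaMinejan18WQ}, and in particular the minimum distance becomes $2^{m-1} - 2^{(m+2)/2}$. The length $2^m$ and dimension $2m+1$ are unchanged by the substitution.

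For the parameters of $\overline{\C_E}$ itself, I would note that the dimension is the complement of the dual dimension, giving $2^m - 1 - 2m$. The minimum distance follows from the second clause of Theorem~\ref{thm-wd}(c): since $\gcd(e,m) = 2 > 1$, we have $\overline{d} = 4$.

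Since the corollary is purely a specialization, there is no genuine obstacle to overcome. The only step requiring a small argument — rather than bookkeeping — is confirming that the parameter regime $\gcd(m,e)=2$ with $m \equiv 0 \pmod{4}$ forces us into case~(c) rather than case~(b) of Theorem~\ref{thm-wd}; everything else is arithmetic simplification.
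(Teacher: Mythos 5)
Your proposal is correct and is exactly the argument the paper intends (the paper leaves the corollary's proof implicit as a specialization of Theorem~\ref{thm-wd}(c) with $\ell = 2\gcd(m,e) = 4$, which yields precisely the entries of Table~\ref{tab-aaMinejan18WQ}). Your extra care in checking that the hypotheses land in case~(c) rather than case~(b) is well placed: it correctly isolates the need for $m \geq 8$, since for $m=4$ the only admissible choice is $e = 2 = m/2$, which falls under case~(b) and gives different parameters.
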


With the help of Lemma~\ref{lem-MI}, we are able to determine the weight distribution of $\overline{\C_E}$, based on the weight distribution of $\overline{\C_E}^\perp$ given in Corollary~\ref{coro-wddual}.

\begin{lemma}\label{lem-wdext}
Let $m \equiv 0 \pmod{4} \ge 4$ and $1 \leq e \le m/2$.  When $\gcd(m, e)=2$, $\overline{\C_{E}}$ has parameters $[2^m, 2^m-1-2m, 4]$, and its weight distribution is given by 
\begin{eqnarray*}
2^{2m+1}\overline{A}_k =  
  \left(1+(-1)^k \right) \binom{2^m}{k} + w E_0(k) + u E_1(k) +  v E_2(k), 
\end{eqnarray*} 
where  
\begin{eqnarray*}
u &=& 2^{m-4} (2^m-1)/5, \\ 
v &=& 2^{m+2} (2^m-1)/5, \\ 
w &=& 2( 3 \cdot 2^{m-4} +1 )(2^m - 1),  
\end{eqnarray*} 
and 
\begin{eqnarray*}
 E_0(k) = \frac{1+(-1)^k}{2} (-1)^{\lfloor k/2 \rfloor} \binom{2^{m-1}}{\lfloor k/2 \rfloor}, 
\end{eqnarray*} 
\begin{eqnarray*}
 E_1(k) = \sum_{\substack{0 \le i \le 2^{m-1}-2^{(m+2)/2} \\
0\le j \le 2^{m-1}+2^{(m+2)/2} \\i+j=k}}((-1)^i +(-1)^j) \binom{2^{m-1}-2^{(m+2)/2}} {i} \binom{2^{m-1}+2^{(m+2)/2}}{j},  
\end{eqnarray*}
\begin{eqnarray*}
 E_2(k) =  \sum_{\substack{0 \le i \le 2^{m-1}-2^{(m-2)/2} \\
0\le j \le 2^{m-1}+2^{(m-2)/2} \\i+j=k}} ((-1)^i + (-1)^j) \binom{2^{m-1}-2^{(m-2)/2}} {i} \binom{2^{m-1}+2^{(m-2)/2}}{j},   
\end{eqnarray*} 
and $0 \leq k \leq 2^m$.  
\end{lemma}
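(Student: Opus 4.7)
The plan is a direct application of the MacWilliams identity (Lemma~\ref{lem-MI}), taking the role of $\C$ to be $\overline{\C_E}^\perp$. Since $(\overline{\C_E}^\perp)^\perp = \overline{\C_E}$ and $\dim \overline{\C_E}^\perp = 2m+1$ by Corollary~\ref{coro-wddual}, the identity with $q=2$ specializes to
\[
\sum_{k=0}^{2^m} \overline{A}_k\, z^k \;=\; 2^{-(2m+1)}(1+z)^{2^m}\sum_{\mu} \overline{A}_\mu^\perp\left(\frac{1-z}{1+z}\right)^{\mu},
\]
where the inner sum ranges over the seven weights $\mu$ tabulated in Table~\ref{tab-aaMinejan18WQ}. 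Each summand simplifies to $\overline{A}_\mu^\perp\,(1+z)^{2^m-\mu}(1-z)^{\mu}$, and I would then extract the coefficient of $z^k$ one weight at a time.

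I would organize the seven contributions into three groups according to the symmetry of the weight distribution about $2^{m-1}$. The pair of extremal weights $\mu\in\{0, 2^m\}$ produces $(1+z)^{2^m}+(1-z)^{2^m}$, whose coefficient of $z^k$ is $(1+(-1)^k)\binom{2^m}{k}$, matching the first term in the claimed formula. The central weight $\mu = 2^{m-1}$ with multiplicity $w$ contributes $w\,(1-z^2)^{2^{m-1}}$, whose coefficient of $z^k$ (which vanishes for odd $k$) is exactly $w E_0(k)$. Each of the two symmetric pairs $\{2^{m-1}\pm 2^{(m+2)/2}\}$ (with common multiplicity $u$) and $\{2^{m-1}\pm 2^{(m-2)/2}\}$ (with common multiplicity $v$) produces a sum of the form $(1+z)^{2^{m-1}+c}(1-z)^{2^{m-1}-c}+(1+z)^{2^{m-1}-c}(1-z)^{2^{m-1}+c}$ with $c = 2^{(m+2)/2}$ or $c = 2^{(m-2)/2}$, respectively. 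Expanding both summands by the binomial theorem and relabeling the inner indices $i\leftrightarrow j$ in the partner term yields exactly $u\, E_1(k)$ and $v\, E_2(k)$.

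Collecting all contributions, comparing coefficients of $z^k$, and clearing the prefactor $2^{-(2m+1)}$ yields the stated identity for $2^{2m+1}\overline{A}_k$. The parameters $[2^m,\,2^m-1-2m,\,4]$ of $\overline{\C_E}$ are inherited directly from Corollary~\ref{coro-wddual}.

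The derivation is essentially mechanical and no conceptual difficulty arises. The only delicate point is the bookkeeping inside each symmetric pair: after relabeling $i\leftrightarrow j$ in the partner summand, one must verify that the two $(-1)^i$ sign factors combine additively into $(-1)^i+(-1)^j$ rather than cancel. This is guaranteed because $(1-z)$ occurs with different exponents in the two terms of a pair, so the parity-carrying binomial index is different in each summand.
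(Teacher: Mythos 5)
Your proposal is correct and follows essentially the same route as the paper: apply the MacWilliams identity (Lemma~\ref{lem-MI}) to $\overline{\C_E}^\perp$ using the weight distribution of Corollary~\ref{coro-wddual}, expand each term $(1-z)^{\mu}(1+z)^{2^m-\mu}$, and extract the coefficient of $z^k$ with the $i\leftrightarrow j$ relabeling for the symmetric pairs. Your treatment is if anything slightly more complete, since you explicitly verify the central term $w(1-z^2)^{2^{m-1}}$ yields $wE_0(k)$, a detail the paper omits as ``similar.''
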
 

\begin{proof}
  With the weight distribution of $\overline{\C_E}^\perp$ given in Table \ref{tab-aaMinejan18WQ}, the weight enumerator $\overline{A}^\perp (z)$ of $\overline{\C_E}^\perp$ is the following
  \begin{eqnarray*}
    \overline{A}^\perp (z) & = & 1 + u z^{2^{m-1} - 2^{(m+2)/2}} + v z^{2^{m-1} - 2^{(m-2)/2}} + w z^{2^{m-1}} \\
    &  & + v z^{2^{m-1} + 2^{(m-2)/2}} + u z^{2^{m-1} + 2^{(m+2)/2}} + z^{2^m} ,
  \end{eqnarray*}
  where $u = 2^{m-4} (2^m - 1) /5$, $v = 2^{m+2} (2^m  - 1)/5$, and $w = 2(3 \cdot 2^{m-4} + 1)(2^m - 1)$. By Lemma~\ref{lem-MI}, in the following we are able to give the weight enumerator $\overline{A}(z)$ of the code $\overline{\C_E}$, which is the dual of the code $\overline{\C_E}^\perp$.
  \begin{eqnarray}\label{eqn-wtenum}
    \lefteqn{ 2^{2m+1} \overline{A}(z) } \nonumber\\
    & = & (1 + z)^{2^m} \overline{A}^\perp \left( \frac{1-z}{1+z} \right) \nonumber \\
    & = & (1+z)^{2^m} + u (1-z)^{2^{m-1} - 2^{(m+2)/2}} (1+z)^{2^{m-1} + 2^{(m+2)/2}} \nonumber \\
    &  & + v (1-z)^{2^{m-1} - 2^{(m-2)/2}} (1+z)^{2^{m-1} + 2^{(m-2)/2}} + w (1-z)^{2^{m-1}} (1+z)^{2^{m-1}} \nonumber \\
    &   &   +  v (1-z)^{2^{m-1} + 2^{(m-2)/2}} (1+z)^{2^{m-1} - 2^{(m-2)/2}} \nonumber \\
    &  &  + u (1-z)^{2^{m-1} + 2^{(m+2)/2}} (1+z)^{2^{m-1} - 2^{(m+2)/2}} + (1-z)^{2^m}. 
  \end{eqnarray}
  We now explicitly compute from the equation above the number $\overline{A}_k$ of codewords of weight $k$ in the code $\overline{\C_E}$. From the first and the last terms in (\ref{eqn-wtenum}), the term 
  $$
  \left(1+(-1)^k \right) \binom{2^m}{k}
  $$
  should be added in $2^{2m+1} \overline{A}_k$. We then count the number of codewords of weight $k$ from the two terms in (\ref{eqn-wtenum}) with the same coefficient $u$. The first gives 
\begin{eqnarray*}
\sum_{\substack{0 \le i \le 2^{m-1}-2^{(m+2)/2} \\
0\le j \le 2^{m-1}+2^{(m+2)/2} \\i+j=k}} (-1)^i \binom{2^{m-1}-2^{(m+2)/2}} {i} \binom{2^{m-1}+2^{(m+2)/2}}{j},  
\end{eqnarray*}
and the second term gives
\begin{eqnarray*}
\sum_{\substack{0 \le i' \le 2^{m-1}-2^{(m+2)/2} \\
0\le j' \le 2^{m-1}+2^{(m+2)/2} \\i'+j'=k}} (-1)^{j'} \binom{2^{m-1}-2^{(m+2)/2}} {i'} \binom{2^{m-1}+2^{(m+2)/2}}{j'}.  
\end{eqnarray*}
These two sum up to $E_1(k)$, as stated in the present lemma. The remaining numbers of codewords of weight $k$ can be computed similarly, and so the detail is omitted. The proof is then completed.   

\end{proof}

By Lemma~\ref{lem-wdext}, the number of codewords of Hamming weight $k$ in $\overline{\C_E}$ can be accordingly determined in the following lemma. 

\begin{lemma}\label{lem-numwt}
Let $m \equiv 0 \pmod{4} \ge 4$ and $1 \leq e \le m/2$.  When $\gcd(m, e)=2$, $\overline{\C_{E}}$ has parameters $[2^m, 2^m-1-2m, 4]$, and 
\begin{eqnarray*}
\overline{A}_4= \frac{2^m(2^m-1)}{12}, 
\end{eqnarray*} 
\begin{eqnarray*}
\overline{A}_6=\frac{2^m(2^m-1)(2^{2m-4}+2^{m-1}+6)}{45}, 
\end{eqnarray*} 
\begin{eqnarray*}
\overline{A}_8=\frac{2^{m-3}(2^m-1)(2^{4m-4}-27\cdot 2^{3m-4}+23\cdot 2^{2m-1} +261\cdot 2^{m-2}+403)}{315}.  
\end{eqnarray*}
\end{lemma}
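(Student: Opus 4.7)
The plan is to specialize the formula of Lemma~\ref{lem-wdext} to $k\in\{4,6,8\}$ and compute each of the three finite sums $E_0(k), E_1(k), E_2(k)$ directly. Since $4$, $6$, $8$ are all even, the factor $1+(-1)^k$ equals $2$, so the leading contribution is simply $2\binom{2^m}{k}$, and $E_0(k)$ collapses to the single binomial $(-1)^{k/2}\binom{2^{m-1}}{k/2}$, which expands as a polynomial in $2^m$ of degree $k/2$. These two contributions are immediate.

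The key simplification for the convolution sums is the observation that when $i+j=k$ is even, $(-1)^i+(-1)^j=2(-1)^i$, so $E_1(k)=2\,[z^k](1-z)^{N_1}(1+z)^{M_1}$ with $N_1=2^{m-1}-2^{(m+2)/2}$, $M_1=2^{m-1}+2^{(m+2)/2}$, and analogously $E_2(k)=2\,[z^k](1-z)^{N_2}(1+z)^{M_2}$ with $N_2=2^{m-1}-2^{(m-2)/2}$, $M_2=2^{m-1}+2^{(m-2)/2}$. Applying the identity $(1-z)^N(1+z)^M=(1-z^2)^N(1+z)^{M-N}$, with $M_1-N_1=2^{(m+4)/2}$ in the first case and $M_2-N_2=2^{m/2}$ in the second, reduces each coefficient to a sum of only $\lfloor k/2\rfloor+1$ terms of the form $(-1)^{\ell}\binom{N}{\ell}\binom{M-N}{k-2\ell}$. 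Each such term is an explicit polynomial in $2^m$ obtained by expanding the falling factorial defining the binomial coefficient.

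From there the proof is routine assembly: I substitute the values $u=2^{m-4}(2^m-1)/5$, $v=2^{m+2}(2^m-1)/5$, and $w=2(3\cdot 2^{m-4}+1)(2^m-1)$ from Corollary~\ref{coro-wddual}, combine the four contributions, collect like powers of $2^m$, and divide by $2^{2m+1}$. The main obstacle is purely algebraic book-keeping, and it is most severe for $\overline{A}_8$: there the formula $2^{4m-4}-27\cdot 2^{3m-4}+23\cdot 2^{2m-1}+261\cdot 2^{m-2}+403$ materialises only after substantial cancellation among the $u$-, $v$-, $w$-, and $\binom{2^m}{8}$-contributions, each of which is a degree-$4$ polynomial in $2^m$ with large coefficients. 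To guard against arithmetic slips I would verify the final expressions for $k=4,6,8$ numerically at a small concrete case (such as $m=4$) against the weight enumerator of the corresponding code computed independently from Corollary~\ref{coro-wddual} via the MacWilliams transform.
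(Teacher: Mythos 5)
Your proposal is correct and follows exactly the route the paper intends: the paper gives no explicit proof of this lemma, stating only that the values follow ``accordingly'' from Lemma~\ref{lem-wdext}, and your plan of specializing that formula at $k=4,6,8$ (using that $(-1)^i+(-1)^j=2(-1)^i$ for even $k$ and rewriting $(1-z)^{N}(1+z)^{M}=(1-z^2)^{N}(1+z)^{M-N}$ to shorten the convolution sums) is the natural execution of that computation. One caution about your proposed sanity check: $m=4$ forces $e=2=m/2$, which is the degenerate boundary of Corollary~\ref{coro-wddual} (there the weight $2^{m-1}-2^{(m+2)/2}$ collapses to $0$ and case (b) of Theorem~\ref{thm-wd} applies instead of case (c)), so $m=8$ is the smallest clean case for a numerical verification.
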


Now we give the main theorem, which confirms the parameters of the Steiner systems from the affine-invariant codes $\overline{\C_E}$, where $E = \{ e \}$. 

\begin{theorem}\label{thm-main}
Let $m \equiv 0 \pmod{4} \ge 4$, $2 \leq e \leq  m/2 $, and $\gcd(m, e)=2$. Then 
the supports of the codewords of weight $4$ in $\overline{\C_E}$ form a $2$-$(2^m, 4, 1)$ 
design, i.e., a Steiner system $S(2, 4, 2^m)$. 
\end{theorem} 

\begin{proof}
Using the weight distribution formula $\overline{A}_k$ given in Lemma \ref{lem-numwt}, 
we obtain 
$$ 
\overline{A}_4= \frac{2^{m-1}(2^m-1)}{6}. 
$$ 
It then follows from (\ref{eqn-para}) that 
$$ 
\lambda=\overline{A}_4 \frac{\binom{4}{2}}{\binom{2^m}{2}}=1. 
$$ 
This completes the proof. 
\end{proof}

Combined with Theorem~\ref{thm-main0}, we obtain Steiner systems $S(2, 4, 2^m)$ for all even $m \ge 4$. 

\begin{theorem}\label{thm-combined}
  Let $m \geq 4$ be even, $2 \leq e \leq  m/2 $, and $\gcd(m, e)=2$. Then the supports of the codewords of weight $4$ in $\overline{\C_E}$ form a $2$-$(2^m, 4, 1)$ 
design, i.e., a Steiner system $S(2, 4, 2^m)$.
\end{theorem}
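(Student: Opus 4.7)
The plan is to prove Theorem~\ref{thm-combined} by a trivial case split on the residue of $m$ modulo $4$, since every even $m \geq 4$ falls into exactly one of $m \equiv 0 \pmod 4$ or $m \equiv 2 \pmod 4$, and each residue class has already been handled by a separate theorem. No new computation or design-theoretic argument is required beyond assembling the two pieces.

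First I would dispatch the case $m \equiv 2 \pmod 4$. In this range $\lfloor m/2 \rfloor = m/2$ since $m$ is even, so the hypotheses $2 \leq e \leq m/2$ and $\gcd(m,e) = 2$ appearing in Theorem~\ref{thm-combined} coincide with those of Theorem~\ref{thm-main0}. Applying Theorem~\ref{thm-main0} then immediately yields that the supports of the weight-$4$ codewords of $\overline{\C_E}$ form an $S(2,4,2^m)$.

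Second I would dispatch the case $m \equiv 0 \pmod 4$ by invoking Theorem~\ref{thm-main}, whose hypotheses are exactly the restriction of the combined hypotheses to this residue class. For bookkeeping purposes I would also note that the theorem is nonvacuous for every even $m \geq 4$, since the choice $e = 2$ always satisfies $2 \leq e \leq m/2$ (using $m \geq 4$) and $\gcd(m, 2) = 2$ (using $m$ even); moreover $2^m \equiv 4 \pmod{12}$ for every even $m \geq 2$, which is consistent with Hanani's necessary condition for the existence of an $S(2, 4, 2^m)$.

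The expected main obstacle is essentially nonexistent: all substantive work, namely the determination of the weight distribution of $\overline{\C_{E}}^{\perp}$, the MacWilliams transformation to obtain $\overline{A}_4$, and the $\lambda = 1$ verification via the identity~(\ref{eqn-para}), has already been carried out in Theorem~\ref{thm-main0} and Theorem~\ref{thm-main}. The only minor point to check is the reconciliation of the range of $e$ across the two source statements, which is instant because the $\lfloor \cdot \rfloor$ in Theorem~\ref{thm-main0} disappears under the assumption that $m$ is even.
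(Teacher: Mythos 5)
Your proposal is correct and matches the paper's (implicit) argument exactly: the paper states Theorem~\ref{thm-combined} as the immediate combination of Theorem~\ref{thm-main0} (for $m \equiv 2 \pmod 4$) and Theorem~\ref{thm-main} (for $m \equiv 0 \pmod 4$), which is precisely your case split. Your added remarks on reconciling $\lfloor m/2 \rfloor = m/2$ and on nonvacuousness via $e=2$ are sound and consistent with the paper's own closing remark after the theorem.
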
  

We remark that for every $m \ge 4$ even, there are flexible choices of $e$ such that $\gcd(m, e) = 2$. As a special case, $e = 2$ always satisfies the condition and Theorem~\ref{thm-combined} will give infinite families of Steiner systems $S(2,4,2^m)$.


As a byproduct, $2$-designs could be constructed from codewords of Hamming weight $k > 4$ of the code $\overline{\C_E}$. Two examples of $2$-designs with parameters are presented in the following. 

\begin{theorem}\label{thm-wt6des}
Let $m \equiv 0 \pmod{4} \ge 4$, $2 \leq e \leq \lfloor m/2 \rfloor$, and $\gcd(m, e)=2$. Then 
the supports of the codewords of weight $6$ in $\overline{\C_E}$ form a $2$-$(2^m, 6, \lambda)$ 
design, where 
$$ 
\lambda = \frac{2^{2m-3} + 2^m + 12}{3}. 
$$ 
\end{theorem}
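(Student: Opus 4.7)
The plan is to reduce the claim to a direct counting computation, exactly mirroring the strategy used in the proof of Theorem~\ref{thm-main}. Since $\overline{\C_E}$ is affine-invariant and $\AGL(1,2^m)$ acts doubly transitively on $\F_{2^m}$, Theorem~\ref{thm-aut} immediately guarantees that for every weight $k$ with $\overline{A}_k \neq 0$, the supports of the codewords of weight $k$ in $\overline{\C_E}$ form a $2$-design. Thus the only content left is to identify the parameter $\lambda$ for $k=6$.

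First I would invoke Lemma~\ref{lem-numwt}, which supplies the explicit formula
\[
\overline{A}_6 = \frac{2^m(2^m-1)(2^{2m-4}+2^{m-1}+6)}{45}
\]
under the hypotheses $m\equiv 0\pmod 4$, $2\le e\le \lfloor m/2\rfloor$, $\gcd(m,e)=2$. In particular $\overline{A}_6>0$, so the support design actually exists and is nontrivial.

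Next I would apply the parameter identity (\ref{eqn-para}), which in the present notation reads
\[
\overline{A}_6\binom{6}{2} = \lambda\binom{2^m}{2}.
\]
Solving for $\lambda$ gives
\[
\lambda = \overline{A}_6\cdot\frac{\binom{6}{2}}{\binom{2^m}{2}}
       = \frac{2^m(2^m-1)(2^{2m-4}+2^{m-1}+6)}{45}\cdot\frac{15\cdot 2}{2^m(2^m-1)},
\]
and after cancellation this simplifies to $\lambda=(2^{2m-3}+2^m+12)/3$, as claimed.

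The argument is essentially mechanical once the weight count from Lemma~\ref{lem-numwt} is in hand, so there is no real obstacle: the whole content of the theorem is packed into the weight-enumerator computation carried out earlier. The only small check worth mentioning is divisibility by $3$ in the final expression, which follows because $m\equiv 0\pmod 4$ forces $2m\equiv 2\pmod 6$ (hence $2^{2m-3}\equiv 2\pmod 3$) and $2^m\equiv 1\pmod 3$, giving $2^{2m-3}+2^m+12\equiv 0\pmod 3$, so $\lambda$ is indeed an integer, as required for a genuine $2$-design.
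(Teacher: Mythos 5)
Your proposal is correct and follows the paper's proof exactly: cite Lemma~\ref{lem-numwt} for $\overline{A}_6$ and apply the identity (\ref{eqn-para}) to extract $\lambda$; your extra remarks (that the $2$-design property comes from affine-invariance via Theorem~\ref{thm-aut}, and the integrality check) are sound additions. One tiny slip in the aside: the claim that $m\equiv 0\pmod 4$ forces $2m\equiv 2\pmod 6$ is false (e.g.\ $m=8$ gives $2m\equiv 4\pmod 6$); the correct reason that $2^{2m-3}\equiv 2\pmod 3$ is simply that $2m-3$ is odd, so the conclusion of your divisibility check still stands.
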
 

\begin{proof}
Using the weight distribution formula $\overline{A}_6$ given in Lemma~\ref{lem-numwt}, 
\begin{eqnarray*}
\overline{A}_6=\frac{2^m(2^m-1)(2^{2m-4}+2^{m-1}+6)}{45}, 
\end{eqnarray*} 
from which it follows that 
$$ 
\lambda=\overline{A}_6 \frac{\binom{6}{2}}{\binom{2^m}{2}}=\frac{2^{2m-3} + 2^m + 12}{3}. 
$$ 
This completes the proof. 
\end{proof}

\begin{theorem}\label{thm-wt8des}
Let $m \equiv 0 \pmod{4} \ge 4$, $2 \leq e \leq \lfloor m/2 \rfloor$, and $\gcd(m, e)=2$. Then 
the supports of the codewords of weight $8$ in $\overline{\C_E}$ form a $2$-$(2^m, 8, \lambda)$ 
design, where 
$$ 
\lambda=\frac{2^{4m-4}-27\cdot 2^{3m-4}+23\cdot 2^{2m-1} +261\cdot 2^{m-2}+403}{45}. 
$$ 
\end{theorem}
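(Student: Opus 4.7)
The plan is to follow the same pattern used in the proof of Theorem \ref{thm-wt6des}: first establish that a $2$-design exists for the weight-$8$ supports, and then compute $\lambda$ purely from the counting identity (\ref{eqn-para}) using the already-available formula for $\overline{A}_8$.

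For the first step, I would recall that $\overline{\C_E}$ is affine-invariant, so that $\AGL(1, 2^m) \le \PAut(\overline{\C_E})$. Since $\AGL(1, 2^m)$ acts doubly transitively on $\F_{2^m}$, Theorem \ref{thm-aut} guarantees that the supports of the codewords of any fixed weight $k \ge 2$ form a $2$-design whenever there is at least one such codeword. By Lemma \ref{lem-numwt} we have $\overline{A}_8 > 0$ (one can check the bracketed polynomial in $m$ is positive for every $m \equiv 0 \pmod 4$ with $m \ge 4$), so the weight-$8$ supports do form a $2$-$(2^m, 8, \lambda)$ design; only the value of $\lambda$ remains to be pinned down.

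For the second step, I would plug $b = \overline{A}_8$, $k = 8$, $t = 2$, $v = 2^m$ into the parameter identity (\ref{eqn-para}), obtaining
$$
\lambda \;=\; \overline{A}_8 \cdot \frac{\binom{8}{2}}{\binom{2^m}{2}} \;=\; \overline{A}_8 \cdot \frac{56}{2^m(2^m-1)}.
$$
Substituting the expression for $\overline{A}_8$ from Lemma \ref{lem-numwt}, the factor $2^m(2^m-1)$ cancels against $2^{m-3}(2^m-1)$ up to a power of $2$, and the constant $56/315 = 8/45$ combines with the remaining $2^{m-3}/2^m = 1/8$ to leave exactly $1/45$ in front of the degree-$(4m-4)$ polynomial in $2^m$. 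This yields
$$
\lambda \;=\; \frac{2^{4m-4} - 27 \cdot 2^{3m-4} + 23 \cdot 2^{2m-1} + 261 \cdot 2^{m-2} + 403}{45},
$$
as claimed.

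I do not foresee any real obstacle: the heavy lifting (the MacWilliams inversion and the identification of $\overline{A}_8$) has already been done in Lemma \ref{lem-wdext} and Lemma \ref{lem-numwt}. The only mild concern is bookkeeping of the factors $2^{m-3}$, $56$, $315$, and $2^m(2^m-1)$; this is routine arithmetic, strictly analogous to the weight-$6$ case in Theorem \ref{thm-wt6des}. Consequently the proof should be just a short verification, essentially one displayed equation invoking Lemma \ref{lem-numwt} and one invoking (\ref{eqn-para}).
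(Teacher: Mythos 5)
Your proposal is correct and follows essentially the same route as the paper: the paper's proof simply quotes $\overline{A}_8$ from Lemma~\ref{lem-numwt} and applies the identity (\ref{eqn-para}) to get $\lambda = \overline{A}_8\binom{8}{2}/\binom{2^m}{2}$, exactly as you do (the paper leaves the affine-invariance justification of the $2$-design property implicit, since it was established for all weights in Section~\ref{sec-maincodes}). Your arithmetic bookkeeping ($56/315 = 8/45$ combined with $2^{m-3}/2^m = 1/8$) checks out.
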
 

\begin{proof}
  Using the weight distribution formula $\overline{A}_8$ given in Lemma~\ref{lem-numwt}, 
\begin{eqnarray*}
\overline{A}_8=\frac{2^{m-3}(2^m-1)(2^{4m-4}-27\cdot 2^{3m-4}+23\cdot 2^{2m-1} +261\cdot 2^{m-2}+403)}{315}.  
\end{eqnarray*}
It then follows that 
$$ 
\lambda=\overline{A}_8 \frac{\binom{8}{2}}{\binom{2^m}{2}}= \frac{2^{4m-4}-27\cdot 2^{3m-4}+23\cdot 2^{2m-1} +261\cdot 2^{m-2}+403}{45}. 
$$ 
The proof is then completed. 
\end{proof}

\subsection{Discussion on isomorphism}\label{sec-iso}

As stated in Theorem~\ref{thm-combined}, the supports of all the codewords with Hamming weight $4$ in the code $\overline{\mathcal{C}_E}$ form a Steiner system $S(2,4,2^m)$, where $m \ge 4$ is even. The points and lines in the affine geometries are the classical Steiner systems with the same parameters. Indeed, the Steiner systems discussed in this paper are in fact isomorphic to those from affine geometries. To see this, we denote by $\alpha$ a primitive element in $\mathbb{F}_4$ and
let $m' = m/2$ for even $m \ge 4$. Then we consider two sets $\mathcal{B}_4$ and $PL$, where
$\mathcal{B}_4$ denotes the set of supports of all codewords with Hamming weight $4$ in the code $\overline{\mathcal{C}_E}$, and $PL$ is defined as follows: 
\begin{eqnarray*}
  PL & = & \{ \{x, x+(x+y), x + \alpha (x+y), x+\alpha^2 (x+y) \}: \ x, y \in \mathbb{F}_{4^{m'}}, \ x \ne y \} \\
  & = & \{ \{x, y, \alpha^2 x + \alpha y, \alpha x + \alpha^2 y\} : \ x, y \in \mathbb{F}_{4^{m'}}, \ x \ne y \}.
\end{eqnarray*}
In fact, the set $PL$ is exactly the set of lines in the affine space $\mathrm{AG}(m', \mathbb{F}_4)$, and is thereby the set of blocks in the classical Steiner system $S(2,4,4^{m'})$. We now prove that $PL = \mathcal{B}_4$. On the one hand, we have 
$$
|PL| = \frac{ {4^{m'} \choose 2}}{{4 \choose 2}} = \frac{2^{m-1}(2^m -1)}{6} = |\mathcal{B}_4|. 
$$
On the other hand, it is checked that 
$$
\mathrm{Tr}(x) + \mathrm{Tr}(y) + \mathrm{Tr}(\alpha x + \alpha^2 y) + \mathrm{Tr}(\alpha^2 x + \alpha y) = 0, 
$$
and 
$$
\mathrm{Tr}(x^{2^e + 1}) + \mathrm{Tr}(y^{2^e + 1}) + \mathrm{Tr}( (\alpha x + \alpha^2 y)^{2^e + 1}) + \mathrm{Tr}( (\alpha^2 x + \alpha y)^{2^e +1}) = 0. 
$$
Then by the definition of the generator polynomial of the code $\mathcal{C}_E$ in (\ref{eqn-ge}), it follows that the set $PL$ is a subset of $\mathcal{B}_4$. Therefore, the two sets $\mathcal{B}_4$ and $PL$ are identical, and the isomorphism is thereby proved.


\section{Concluding remarks}\label{sec-con} 

The contribution of this paper is to prove that a family of extended cyclic 
codes supports Steiner systems $S(2,4,2^m)$ for all $m \equiv 0 \pmod{4} \geq 4$. The result of this paper is a follow-up of~\cite{Ding18}, and also complements that of \cite{Ding18}. Although the Steiner systems $S(2,4,2^m)$ presented in this paper and~\cite{Ding18} are isomorphic to those of points and lines in the affine geometries, the contributions of this paper and~\cite{Ding18} discovered a coding-theoretic construction of the classical geometric Steiner systems.


The key motivation of this paper comes from the following challenging research problems: 
\begin{itemize}
\item Is a given $t$-design a support design of a linear code? In other words, 
      is there a linear code that holds a given $t$-design? 
\item Construct a specific linear code that holds a given $t$-design. 
\item Construct a specific linear code with minimum dimension that holds a given $t$-design.       
\item Construct new $t$-designs from known or new linear codes. 
\end{itemize} 
It would be nice if a coding-theoretic construction of known Steiner systems can be worked out. 
The reader is cordially invited to attack this problem.

\section*{Acknowledgements} 

The author is very grateful to the two anonymous reviewers for their comments and suggestions that much improved 
the quality of this paper. In particular, it is much appreciated that one reviewer pointed out the isomorphism between the Steiner systems discussed in this paper and those from affine geometries with the same parameters. 




\begin{thebibliography}{10}

\bibitem{AK92}
\newblock E.~F. Assmus~Jr. and J.~D. Key,
\newblock \emph{Designs and their codes}, vol. 103 of Cambridge Tracts in
  Mathematics,
\newblock Cambridge University Press, Cambridge, 1992.

\bibitem{AK98}
\newblock E.~F. Assmus~Jr. and J.~D. Key,
\newblock Polynomial codes and finite geometries,
\newblock in \emph{Handbook of coding theory, {V}ol. {I}, {II}},
\newblock North-Holland, Amsterdam, 1998,
\newblock 1269--1343.

\bibitem{BJL99}
\newblock T.~Beth, D.~Jungnickel and H.~Lenz,
\newblock \emph{Design theory. {V}ol. {II}}, vol.~78 of Encyclopedia of
  Mathematics and its Applications,
\newblock 2nd edition,
\newblock Cambridge University Press, Cambridge, 1999.

\bibitem{CMOT06}
\newblock P.~J. Cameron, H.~R. Maimani, G.~R. Omidi and B.~Tayfeh-Rezaie,
\newblock 3-designs from {${\rm PSL}(2,q)$},
\newblock \emph{Discrete Math.}, \textbf{306} (2006), 3063--3073.

\bibitem{CM06}
\newblock C.~J. Colbourn and R.~Mathon,
\newblock Steiner systems,
\newblock in \emph{Handbook of Combinatorial Designs, Second Edition},
\newblock Chapman and Hall/CRC, 2006,
\newblock 128--135.

\bibitem{Ding182}
\newblock C.~Ding,
\newblock Infinite families of 3-designs from a type of five-weight code,
\newblock \emph{Des. Codes Cryptogr.}, \textbf{86} (2018), 703--719.

\bibitem{Ding18}
\newblock C.~Ding,
\newblock An infinite family of {S}teiner systems {$S(2,4,2^m)$} from cyclic
  codes,
\newblock \emph{J. Combin. Des.}, \textbf{26} (2018), 127--144.

\bibitem{Ding19}
\newblock C.~Ding,
\newblock \emph{Designs from linear codes},
\newblock World Scientific Publishing Co. Pte. Ltd., Hackensack, NJ, 2019.

\bibitem{DL17}
\newblock C.~Ding and C.~Li,
\newblock Infinite families of 2-designs and 3-designs from linear codes,
\newblock \emph{Discrete Math.}, \textbf{340} (2017), 2415--2431.

\bibitem{DLX18}
\newblock C.~Ding, C.~Li and Y.~Xia,
\newblock Another generalisation of the binary {R}eed-{M}uller codes and its
  applications,
\newblock \emph{Finite Fields Appl.}, \textbf{53} (2018), 144--174.

\bibitem{DT20}
\newblock C.~Ding and C.~Tang,
\newblock Infinite families of near {MDS} codes holding {$t$}-designs,
\newblock \emph{IEEE Trans. Inform. Theory}, \textbf{66} (2020), 5419--5428.

\bibitem{DZ17}
\newblock C.~Ding and Z.~Zhou,
\newblock Parameters of 2-designs from some {B}{C}{H} codes,
\newblock in \emph{International Conference on Codes, Cryptology, and
  Information Security},
\newblock Springer, 2017,
\newblock 110--127.

\bibitem{DW20}
\newblock X.~Du, R.~Wang, C.~Tang and Q.~Wang,
\newblock Infinite families of 2-designs from linear codes,
\newblock \emph{Applicable Algebra in Engineering, Communication and
  Computing}, 1--19.

\bibitem{DW22}
\newblock X.~Du, R.~Wang, C.~Tang and Q.~Wang,
\newblock Infinite families of $2$-designs from two classes of binary cyclic
  codes with three nonzeros,
\newblock \emph{Adv. Math. Commun.}, \textbf{16} (2022), 157--168.

\bibitem{Hana61}
\newblock H.~Hanani,
\newblock The existence and construction of balanced incomplete block designs,
\newblock \emph{Ann. Math. Statist.}, \textbf{32} (1961), 361--386.

\bibitem{HP03}
\newblock W.~C. Huffman and V.~Pless,
\newblock \emph{Fundamentals of error-correcting codes},
\newblock Cambridge University Press, Cambridge, 2003.

\bibitem{JT91}
\newblock D.~Jungnickel and V.~D. Tonchev,
\newblock Exponential number of quasi-symmetric {SDP} designs and codes meeting
  the {G}rey-{R}ankin bound,
\newblock \emph{Des. Codes Cryptogr.}, \textbf{1} (1991), 247--253.

\bibitem{Kas69}
\newblock T.~Kasami,
\newblock Weight distributions of {B}ose-{C}haudhuri-{H}ocquenghem codes,
\newblock in \emph{Combinatorial {M}athematics and its {A}pplications ({P}roc.
  {C}onf., {U}niv. {N}orth {C}arolina, {C}hapel {H}ill, {N}.{C}., 1967)},
\newblock Univ. North Carolina Press, Chapel Hill, N.C., 1969,
\newblock 335--357.

\bibitem{KP95}
\newblock G.~T. Kennedy and V.~Pless,
\newblock A coding-theoretic approach to extending designs,
\newblock \emph{Discrete Math.}, \textbf{142} (1995), 155--168.

\bibitem{KP03}
\newblock J.-L. Kim and V.~Pless,
\newblock Designs in additive codes over {$GF(4)$},
\newblock \emph{Des. Codes Cryptogr.}, \textbf{30} (2003), 187--199.

\bibitem{LTW12}
\newblock W.~Liu, J.~Tang and Y.~Wu,
\newblock Some new 3-designs from {$PSL(2,q)$} with {$q\equiv 1\pmod4$},
\newblock \emph{Sci. China Math.}, \textbf{55} (2012), 1901--1911.

\bibitem{MS771}
\newblock F.~J. MacWilliams and N.~J.~A. Sloane,
\newblock \emph{The theory of error-correcting codes. {I}},
\newblock North-Holland Publishing Co., Amsterdam-New York-Oxford, 1977,
\newblock North-Holland Mathematical Library, Vol. 16.

\bibitem{RR10}
\newblock C.~Reid and A.~Rosa,
\newblock Steiner systems $ {S} (2, 4, v) $-a survey,
\newblock \emph{The Electronic Journal of Combinatorics}, \textbf{1000} (2010),
  18--1.

\bibitem{TD21}
\newblock C.~Tang and C.~Ding,
\newblock An infinite family of linear codes supporting 4-designs,
\newblock \emph{IEEE Trans. Inform. Theory}, \textbf{67} (2021), 244--254.

\bibitem{TDX19}
\newblock C.~Tang, C.~Ding and M.~Xiong,
\newblock Steiner systems {$S(2, 4, \frac{3^m-1}{2})$} and 2-designs from
  ternary linear codes of length {$\frac{3^m-1}{2}$},
\newblock \emph{Des. Codes Cryptogr.}, \textbf{87} (2019), 2793--2811.

\bibitem{Ton98}
\newblock V.~D. Tonchev,
\newblock Codes and designs,
\newblock in \emph{Handbook of coding theory, {V}ol. {I}, {II}} (eds. V.~Pless
  and W.~C. Huffman),
\newblock North-Holland, Amsterdam, 1998,
\newblock 1229--1267.

\bibitem{Ton07}
\newblock V.~D. Tonchev,
\newblock Codes,
\newblock in \emph{Handbook of combinatorial designs} (eds. C.~J. Colbourn and
  J.~H. Dinitz), 2nd edition,
\newblock Discrete Mathematics and its Applications (Boca Raton), Chapman \&
  Hall/CRC, Boca Raton, FL, 2007,
\newblock xxii+984.

\bibitem{Lint99}
\newblock J.~H. van Lint,
\newblock \emph{Introduction to coding theory}, vol.~86 of Graduate Texts in
  Mathematics,
\newblock 3rd edition,
\newblock Springer-Verlag, Berlin, 1999.

\bibitem{XLW20}
\newblock C.~Xiang, X.~Ling and Q.~Wang,
\newblock Combinatorial {$t$}-designs from quadratic functions,
\newblock \emph{Des. Codes Cryptogr.}, \textbf{88} (2020), 553--565.

\end{thebibliography}

\providecommand{\href}[2]{#2}
\providecommand{\arxiv}[1]{\href{http://arxiv.org/abs/#1}{arXiv:#1}}
\providecommand{\url}[1]{\texttt{#1}}
\providecommand{\urlprefix}{URL }

\end{document}